\newtheorem{thm}{Theorem}[section]
\newtheorem{lem}[thm]{Lemma}
\newtheorem{prop}[thm]{Proposition}
\newtheorem{cor}[thm]{Corollary}
\theoremstyle{definition}
\newtheorem{defn}[thm]{Definition}
\newtheorem{ex}[thm]{Example}
\newtheorem{rem}[thm]{Remark}
\newtheorem*{notn}{Notation}
\newcommand{\condb}{(*)}
\newcommand{\condc}{(**)}
\newcommand{\Ob}{\mathrm{Ob}}
\newcommand{\nar}{\unlhd_{\mathrm{nar}}}
\newcommand{\gar}{\succ_{\mathrm{nar}}}
\newcommand{\bN}{\mathbb{N}}
\newcommand{\bZ}{\mathbb{Z}}
\newcommand{\mc}{\mathcal}
\newcommand{\inv}{^{-1}}
\newcommand{\triv}{\{1\}}
\newcommand{\N}{\mathrm{N}}
\newcommand{\OO}{\mathrm{O}}
\newcommand{\CC}{\mathrm{C}}
\newcommand{\Z}{\mathrm{Z}}
\newcommand{\M}{\mathrm{M}}
\newcommand{\defbold}{\textbf}
\begin{document}

\title{Inverse system characterizations of the (hereditarily) just infinite property in profinite groups}

\author{Colin D. Reid}

\maketitle

\begin{abstract}We give criteria on an inverse system of finite groups that ensure the limit is just infinite or hereditarily just infinite.  More significantly, these criteria are `universal' in that all (hereditarily) just infinite profinite groups arise as limits of the specified form.

This is a corrected and revised version of \cite{ReidInvLim}.\end{abstract}

\section{Introduction}

\begin{notn}In this paper, all groups will be profinite groups, all homomorphisms are required to be continuous, and all subgroups are required to be closed; in particular, all references to commutator subgroups are understood to mean the closures of the corresponding abstractly defined subgroups.  For an inverse system
\[
\Lambda = \{(G_n)_{n > 0},\rho_n: G_{n+1} \twoheadrightarrow G_n\}
\]
of finite groups, we require all the homomorphisms $\rho_n$ to be surjective.  A subscript $o$ will be used to indicate open inclusion, for instance $A \leq_o B$ means that $A$ is an open subgroup of $B$.  We use `pronilpotent' and `prosoluble' to mean a group that is the inverse limit of finite nilpotent groups or finite soluble groups respectively, and `$G$-invariant subgroup of $H$' to mean a subgroup of $H$ normalized by $G$.
\end{notn}

A profinite group $G$ is \defbold{just infinite} if it is infinite, and every nontrivial normal subgroup of $G$ is of finite index; it is \defbold{hereditarily just infinite} if in addition every open subgroup of $G$ is just infinite.

At first sight the just infinite property is a qualitative one, like that of simplicity: either a group has nontrivial normal subgroups of infinite index, or it does not.  However, it has been shown by Barnea, Gavioli, Jaikin-Zapirain, Monti and Scoppola (see \cite{BGJMS}, especially Theorem 36) and the present author (\cite{Rei}) that the just infinite property in profinite groups can be characterized by properties of the lattice of \emph{open} normal subgroups, and as such may be regarded as a kind of boundedness property on the finite images (see \cite[Theorems B1 and B2]{Rei}); moreover, it suffices to consider any collection of finite images that form an inverse system for the group.  Similar considerations apply to the hereditarily just infinite property.

The weakness of the characterization given in \cite{Rei} is that the conditions imposed on the finite images are asymptotic: no correspondence is established between the (hereditarily) just infinite property \textit{per se} and the structure of any given finite image.  In a sense this is unavoidable, as any finite group appears as the image of a hereditarily just infinite profinite group (see Example \ref{primex}).  By contrast in \cite{BGJMS}, a strong `periodic' structure of some just infinite pro-$p$ groups is described, but this structure can only exist for a certain special class of just infinite virtually pro-$p$ groups, those which have a property known as finite obliquity (essentially in the sense of \cite{KLP}).  In the present paper we therefore take a new approach, which is to show the existence of an inverse system with certain specified properties for any (hereditarily) just infinite profinite group, and in turn to show that the specified properties imply the (hereditarily) just infinite property in the limit.  The inverse system characterizations in this paper are essentially variations on the following:

\begin{thm}\label{introthm}Let $\Lambda = \{(G_n)_{n > 0},\rho_n: G_{n+1} \rightarrow G_n\}$ be an inverse system of finite groups.  Let $A_n$ be a nontrivial normal subgroup of $G_n$ and let $P_n = \rho_n(A_{n+1})$ for $n > 0$.  Suppose furthermore, for each $n > 0$:
\begin{enumerate}[(i)]  
\item $A_{n+1} > \ker\rho_n \ge P_{n+1}$;
\item $A_n$ has a unique maximal $G_n$-invariant subgroup;
\item Each normal subgroup of $G_n$ contains $P_n$ or is contained in $A_n$ (or both).
\end{enumerate}

Then $G = \varprojlim G_n$ is just infinite.  Conversely, every just infinite profinite group is the limit of such an inverse system.\end{thm}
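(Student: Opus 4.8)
The plan is to treat the two directions separately; the forward implication is short and the converse is the real work. For the forward direction, write $\pi_n\colon G\to G_n$ for the projections, $K_n=\ker\pi_n$, and $\tilde A_n=\pi_n^{-1}(A_n)$, and let $M_n$ be the unique maximal $G_n$-invariant subgroup of $A_n$ furnished by (ii). I would first record three consequences of the hypotheses. (a) Applying (i) at level $n+1$ gives $\ker\rho_{n+1}<A_{n+2}$, so $P_{n+1}=\rho_{n+1}(A_{n+2})\neq\triv$; since $\ker\rho_n\ge P_{n+1}$ by (i), every $\rho_n$ is non-injective and $G$ is infinite. (b) From $A_{n+1}>\ker\rho_n$ we get $\ker\rho_n\le A_{n+1}$, i.e. $K_n\le\tilde A_{n+1}$; and from $\pi_{n+1}(\tilde A_{n+2})=P_{n+1}\le\ker\rho_n=\pi_{n+1}(K_n)$ we get $\tilde A_{n+2}\le K_n$, whence $\bigcap_n\tilde A_n\le\bigcap_n K_n=\triv$. (c) By (b), $\ker\rho_n$ is a proper $G_{n+1}$-invariant subgroup of $A_{n+1}$, so (ii) forces $\ker\rho_n\le M_{n+1}$.

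Now let $N$ be a nontrivial closed normal subgroup; I must show $N$ is open. If $\pi_n(N)\le A_n$ for every $n$ then $N\le\bigcap_n\tilde A_n=\triv$ by (b), a contradiction, so by (iii) there is an $n$ with $\pi_n(N)\ge P_n$. The key step is a lifting lemma: $\pi_n(N)\ge P_n$ implies $\pi_{n+1}(N)\ge A_{n+1}$. Indeed $\rho_n(\pi_{n+1}(N))=\pi_n(N)\ge P_n=\rho_n(A_{n+1})$ gives $\pi_{n+1}(N)\ker\rho_n\ge A_{n+1}$; as $\ker\rho_n\le A_{n+1}$, Dedekind's law yields $J\ker\rho_n=A_{n+1}$ for $J:=\pi_{n+1}(N)\cap A_{n+1}$; since $\ker\rho_n\le M_{n+1}$ this forces $J\not\le M_{n+1}$, and the uniqueness in (ii) then gives $J=A_{n+1}$, i.e. $A_{n+1}\le\pi_{n+1}(N)$. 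Because $P_{n+1}\le\ker\rho_n\le A_{n+1}$, the hypothesis of the lemma is reinstated at level $n+1$, so by induction $\pi_m(N)\ge A_m$ for all $m>n$. Finally $A_m\ge\ker\rho_{m-1}=\pi_m(K_{m-1})$ gives $NK_m\ge K_{m-1}$ for all $m>n$; telescoping, $K_n\le NK_m$ for all $m$, and intersecting over $m$ (using $\bigcap_m NK_m=N$ for closed $N$) gives $K_n\le N$. Hence $N$ is open and $G$ is just infinite.

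For the converse I would reduce the problem to producing a single descending chain. Assuming $G$ is second countable (which is forced, since a countable inverse system has second countable limit), the claim is that one can choose open normal subgroups $\tilde A_1>\tilde A_2>\cdots$ with $\bigcap_n\tilde A_n=\triv$ such that for every $n$: (α) $\tilde A_{n+1}$ is the unique maximal $G$-invariant subgroup of $\tilde A_n$; and (β) $\tilde A_n/\tilde A_{n+1}$ is the monolith of $G/\tilde A_{n+1}$. Granting such a chain, I set $G_n:=G/\tilde A_{n+2}$ with the natural maps $\rho_n$, and $A_n:=\tilde A_n/\tilde A_{n+2}$; then $P_n=\tilde A_{n+1}/\tilde A_{n+2}$ and $\ker\rho_n=\tilde A_{n+2}/\tilde A_{n+3}=P_{n+1}$, so (i) is immediate; (α) says every proper $G$-invariant subgroup of $\tilde A_n$ lies in $\tilde A_{n+1}$, giving (ii) with $M_n=\tilde A_{n+1}/\tilde A_{n+2}$; and (β) at level $n+1$ says $P_n$ is the monolith of $G_n$, so every nontrivial normal subgroup of $G_n$ contains $P_n$, which is exactly (iii).

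The hard part is the existence of a chain satisfying (α) and (β) cofinally. Condition (β) alone is manageable: the standard fact that monolithic quotients separate points (for $1\neq g$, a maximal open normal subgroup $L$ with $g\notin L$ has $G/L$ monolithic) shows such $L$ are plentiful and have trivial intersection. The difficulty is that (α) and (β) are dual requirements on the same chief factor $\tilde A_n/\tilde A_{n+1}$---(α) controls the top of $\tilde A_n$ from below while (β) controls the bottom of $G/\tilde A_{n+1}$ from above---and a monolith-preimage can easily fail to be monolithic from above (it may admit $G$-invariant supplements to its monolith). I expect the main obstacle to be showing that in a just infinite group one can thread these two conditions down a single chain all the way to $\triv$; this is where just-infiniteness must enter decisively (e.g. through the fact that any two nontrivial normal subgroups meet in an open subgroup, so the lattice of open normal subgroups is sufficiently rigid), presumably via the obliquity-type analysis of the lattice of open normal subgroups used in \cite{Rei} and \cite{BGJMS}, combined with a Zorn/diagonal argument to keep the intersection trivial.
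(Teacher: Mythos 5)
Your forward direction is correct, and it is essentially the paper's own argument: your ``lifting lemma'' via Dedekind's law and the uniqueness of $M_{n+1}$ is exactly Lemma \ref{critlem} as deployed in the converse part of Theorem \ref{mainjithm}, and the bookkeeping with $K_n$, $\tilde A_n$ matches the paper's proof step for step.

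The converse, however, has a genuine gap, and you have correctly located it yourself: the existence of the chain satisfying (α) and (β) is never proved, and that existence is the entire substance of this direction. Worse, your reduction overshoots the theorem. Setting $G_n = G/\tilde A_{n+2}$ forces $\ker\rho_n = P_{n+1}$, whereas the statement only requires $A_{n+1} > \ker\rho_n \ge P_{n+1}$; correspondingly (β) demands that $P_n$ be the monolith of $G_n$, which is strictly stronger than condition (iii) --- (iii) tolerates nontrivial normal subgroups of $G_n$ inside $A_n$ that miss $P_n$. Once (α) is imposed the chain is rigid, $\tilde A_{n+1} = \M_G(\tilde A_n)$, so you have no freedom left: there is no reason $\M_G(\tilde A_n)$ is again narrow, and (β) at step $n$ is destroyed by any normal subgroup $N$ of $G$ with $\tilde A_{n+1} < N$ and $N \cap \tilde A_n = \tilde A_{n+1}$ (equivalently, $N$ centralizing the chief factor $\tilde A_n/\tilde A_{n+1}$ over $\tilde A_{n+1}$); just-infiniteness does not exclude such $N$, and this is precisely the Klein-four-type phenomenon the paper flags when discussing association. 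The paper's proof keeps exactly the slack you discarded: it builds a chain of narrow subgroups $K_n \nar G$ (which exist associated to any chief factor by the compactness argument of Lemma \ref{narrowassoc}), cuts at $\ker\rho_n = \M_G(K_{n+1})$ --- the opposite extreme from your $\ker\rho_n = P_{n+1}$ --- and, as the decisive input, invokes Theorem \ref{genob} (the obliquity boundedness theorem of \cite{Rei} and \cite{BGJMS}, which you only gesture at) to choose $K_{n+1}$ inside the \emph{open} subgroup $\Ob_G(\M_G(K_n))$. Then every normal subgroup not contained in $\M_G(K_n)$ automatically contains $\Ob_G(\M_G(K_n)) \ge K_{n+1}$, which yields condition (iii) as a dichotomy with no monolithicity requirement at all. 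So the missing idea is the combination of narrow subgroups with the obliquity theorem, and the specific chain you reduced to is both unsubstantiated and likely unobtainable in general.
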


A similar characterization applies to hereditarily just infinite groups: see Theorem~\ref{mainjithm:hji}.  As shown by Wilson in \cite{Wil}, such groups are not necessarily virtually pro-$p$; we derive one of Wilson's constructions as a special case.

We will also discuss some examples of hereditarily just infinite profinite groups that are not virtually prosoluble, illustrating some features of this class.  In particular, the following serves as a source of examples of such groups:

\begin{defn}\label{def:subprim}Let $X$ be a set and let $G$ be a group acting on $X$, with kernel $K$.  We say $G$ acts \defbold{subprimitively} on $X$ if for every normal subgroup $H$ of $G$, then $H/(H \cap K)$ acts faithfully on every $H$-orbit.\end{defn}

\begin{prop}\label{intro:subprim}Let $G$ be a just infinite profinite group.  Suppose there are infinitely many nonabelian chief factors $R/S$ of $G$ for which $G$ permutes the simple factors of $R/S$ subprimitively.  Then $G$ is hereditarily just infinite.\end{prop}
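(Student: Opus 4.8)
The plan is to argue directly: assuming $G$ is just infinite and has infinitely many nonabelian chief factors on whose simple factors $G$ acts subprimitively, I show every open subgroup is just infinite. First I would reduce the claim. Let $U \le_o G$ and $1 \ne N \unlhd U$; I must show $N$ is open. Passing to the normal core $V = \bigcap_{g \in G} U^g \unlhd_o G$, the subgroup $N \cap V$ is normal in $U$, hence in $V$. If $N \cap V = 1$ then $N \cong NV/V$ is finite, so the normal closure $N^G$ is a nontrivial finite normal subgroup of the infinite group $G$, contradicting just infiniteness; therefore $N \cap V \ne 1$, and it suffices to show $N \cap V$ is open. Replacing $U, N$ by $V, N \cap V$, I may assume $V \unlhd_o G$ is open normal and $1 \ne N \unlhd V$, and must prove that $N$ is open.

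Next I would set $H = N^G$. Since $V \le N_G(N)$ we have $N \unlhd H \le V$, and $H$ is the product $N_1 \cdots N_k$ of the finitely many $G$-conjugates $N_i = N^{g_i}$ of $N$, each normal in $H$; moreover $H$ is open, being a nontrivial normal subgroup of the just infinite group $G$. Suppose, for contradiction, that $N$ is \emph{not} open. Then $N \subsetneq H$ and $k \ge 2$. As $G/H$ is finite, all but finitely many of the given nonabelian subprimitive chief factors $R/S$ lie below $H$ (that is, $R \le H$), and I may choose such factors with $\bigcap S = 1$.

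For such a factor let $X$ be its set of simple factors and $K \unlhd G$ the kernel of the action of $G$ on $X$, so that $R \le K$. The engine of the proof is the following consequence of subprimitivity applied to the normal subgroup $H$: since $H/(H \cap K)$ acts faithfully on each $H$-orbit, any element of $H$ that fixes even one $H$-orbit pointwise must lie in $K$. Applied to $N \le H$, this shows that whenever $N \not\le K$ (that is, $N$ moves at least one simple factor of $R/S$), the support of $N$ on $X$ meets every $H$-orbit; and since this support is $V$-invariant and $H \le V$, it is a union of $H$-orbits, hence all of $X$. In other words, subprimitivity forces any nontrivially acting $N$ to have full support on $X$.

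The contradiction I am driving at is that an infinite-index $N$ cannot have full support on every such factor. The picture to keep in mind is the non-hereditary case, in which $G$ has an open normal subgroup that splits as a direct product $L_1 \times \cdots \times L_m$ with $m \ge 2$ of nontrivial subgroups transitively permuted by $G$: on a sufficiently deep nonabelian chief factor the simple factors distribute among the $L_i$, and then an element of $L_1$ fixes pointwise the whole orbit belonging to $L_2$ while moving a factor inside $L_1$, violating subprimitivity. The hard part is to extract exactly this localization from the bare assumption that $N$ has infinite index: I must exhibit, among the infinitely many nonabelian subprimitive chief factors, one on which $N$ acts nontrivially but with \emph{proper} support, directly contradicting the full-support conclusion. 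This is the main obstacle, and it is delicate because full support on a single finite factor is not in itself impossible (an infinite-index subgroup can act as a long cycle); so the localization has to be produced by combining the infinite supply of factors (with $\bigcap S = 1$) with the structure theory of just infinite groups to force a genuine product decomposition as above, and then arranging that a witnessing factor is simultaneously nonabelian and one of the subprimitive ones.
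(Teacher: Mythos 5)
Your proposal stalls exactly where the real work begins, and you say so yourself. The setup is sound: the reduction to $1 \ne N \unlhd V \unlhd_o G$, and the dichotomy that $N$ either lies in the kernel $K$ of the action on the simple factors of $R/S$ or has full support, which is essentially the paper's Lemma~\ref{lem:subprim} applied with $N \unlhd H \unlhd G$. But the contradiction you aim for --- ``an infinite-index $N$ cannot have full support on every such factor'' --- is not a true principle on its own, and the localization you admit you cannot produce is not how the paper proceeds at all. The paper (via Theorem~\ref{primhji}, of which Proposition~\ref{intro:subprim} is a corollary) \emph{embraces} full support: no fixed points on the simple factors of $P_n$ gives $[H_n,P_n]=P_n$; normality of $H_n$ in $\pi_n(K) \ge P_n$ then gives $P_n \le H_n$; and crucially the inverse system is built so that $A_{n+1}$ is narrow with $\M(A_{n+1}) = \ker\rho_n$, whence Lemma~\ref{critlem} converts $P_n \le H_n$ into $A_{n+1} \le H_{n+1}$, hence $\ker\rho_n \le H_{n+1}$, i.e.\ openness. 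Your sketch has no analogue of this conversion: covering infinitely many chief factors $R_n/S_n$ with $\bigcap S_n = 1$ does not by itself force $N$ to be open; the factors must be chained through narrow subgroups, constructed using the obliquity theorem (Theorem~\ref{genob}) and Lemma~\ref{narrowassoc}, for the containments to propagate.

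Two further gaps are worth naming. First, you never exclude the case $N \le K$ for \emph{all} the chosen factors, i.e.\ $N$ acting on each $R/S$ without moving any simple factor. The paper rules this out by replacing $H$ with the intersection of its derived series --- legitimate because $H_n \cap P_n$ is a nontrivial subnormal subgroup of $G_n$, hence insoluble, so $H$ is not prosoluble --- and then using solubility of $\N(F)/F\CC(F)$ over each simple factor $F$, i.e.\ the Schreier conjecture; this is precisely why the paper derives Proposition~\ref{intro:subprim} from Theorem~\ref{primhji} ``given the positive answer to the Schreier conjecture.'' Your argument never invokes Schreier and cannot close this case without it. Second, a smaller slip in your reduction: the finiteness of $N^G$ when $N \cap V = 1$ is asserted, not proved; what you actually get is $[N,V]=1$, so $N^G \le \CC_G(V)$, and one must then argue that this forces $G$ to be virtually abelian, contradicting the supply of nonabelian chief factors. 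Finally, the ``genuine product decomposition'' you hope to force from a subgroup of infinite index is Wilson's dichotomy \cite[2.1]{Wil}; the paper uses it for Theorem~\ref{mainjithm:hji} but deliberately avoids it in Theorem~\ref{primhji}. If you wanted to complete your direct route you would need to import that result explicitly and then still carry out the support-localization on a subprimitive nonabelian factor lying below the commuting conjugates --- the very step your sketch leaves open.
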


\begin{rem}The following classes of permutation groups are all special cases of transitive subprimitive permutation groups: regular permutation groups, primitive permutation groups, quasiprimitive groups in the sense of \cite{Praeger} and semiprimitive groups in the sense of \cite{BM}.  An intransitive faithful action is subprimitive if and only if the group acts faithfully and subprimitively on every orbit.\end{rem}

\paragraph{Acknowledgements}
My thanks go to Davide Veronelli, who pointed out some errors in the original article and proposed some corrections; the need to fix the errors in the original article was the immediate motivation for this revised article.  I also thank John Wilson for showing me a preliminary version of \cite{Wil}; Charles Leedham-Green for his helpful comments; and Laurent Bartholdi for his suggestion to look at \cite{Lucchini} and determine whether the group in question is hereditarily just infinite.

\section{Preliminaries}

\begin{defn}Let $G$ be a profinite group.  A \defbold{chief factor} of $G$ is a quotient $K/L$ of some normal subgroup $K$ of $G$, such that $L \unlhd G$ and there are no normal subgroups $M$ of $G$ satisfying $K < M < L$.

We say two chief factors $K_1/L_1$ and $K_2/L_2$ of $G$ are \defbold{associated} to each other if 
\[
K_1L_2 = K_2L_1; \; K_i \cap L_1L_2 = L_i \; (i = 1,2).
\]
This also implies that $K_1L_2=K_1K_2$.

Given a normal subgroup $N$ of $G$, say $N$ \defbold{covers} the chief factor $K/L$ if $NL \ge K$.\end{defn}

The association relation is not transitive in general.  For instance, if $G$ is the Klein $4$-group with subgroups $\{H_1,H_2,H_3\}$ of order $2$, then the pairs of associated chief factors are those of the form $\{G/H_i,H_j/\triv\}$ for $i \neq j$.  Thus $H_1/\triv$ is associated to $G/H_2$ and $G/H_2$ is associated to $H_3/\triv$, but $H_1/\triv$ and $H_3/\triv$ are not associated to each other.

\emph{Nonabelian} chief factors however are much better behaved under association.  A theory of association classes of nonabelian chief factors is developed in a much more general context in \cite{RW_Polish}.

Parts (i) and (ii) of the following lemma are standard facts about profinite groups and will be used without further comment.

\begin{lem}\label{chieflem}Let $G$ be a profinite group.
\begin{enumerate}[(i)]
\item Let $K$ and $L$ be normal subgroups of $G$ such that $K > L$.  Then there exists $L \le M < K$ such that $K/M$ is a chief factor of $G$.
\item Every chief factor of $G$ is finite.
\item Let $K_1/L_1$ and $K_2/L_2$ be associated chief factors of $G$.  Then $K_1/L_1 \cong K_2/L_2$ and $\CC_G(K_1/L_1) = \CC_G(K_2/L_2)$.
\end{enumerate}
\end{lem}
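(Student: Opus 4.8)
The plan is to dispatch (i) and (ii) quickly as standard consequences of the fact that the open normal subgroups of a profinite group form a base of neighbourhoods of the identity with trivial intersection, and to reserve the real work for (iii), which is a diagram-chase through the isomorphism theorems once the association conditions have been unwound. For (i), I would quotient by $L$ so as to assume $L = \triv$ and $K \neq \triv$ normal in $G$, then choose an open normal $U \unlhd G$ with $K \cap U < K$ (possible since $\bigcap U = \triv$); as $K/(K \cap U) \cong KU/U$ is finite, among the finitely many normal subgroups of $G$ lying strictly between $K \cap U$ and $K$ there is a maximal proper one $M$, and $K/M$ is then a chief factor. For (ii), the same reduction makes $K$ a minimal nontrivial normal subgroup, and choosing $U$ with $K \cap U < K$ forces $K \cap U = \triv$ by minimality, whence $K \cong KU/U \leq G/U$ is finite.

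For (iii), I would set $W := K_1 L_2 = K_2 L_1$ (the first association condition). Both $L_1 L_2$ and $W$ are normal in $G$, and both are closed, since each is the continuous image of a product of two compact groups and hence compact; this justifies applying the isomorphism theorems in the profinite setting. The hypothesis $K_1 \cap L_1 L_2 = L_1$, together with $K_1(L_1 L_2) = K_1 L_2 = W$ (using $L_1 \leq K_1$), feeds the second isomorphism theorem to give a natural map
\[
K_1/L_1 = K_1/(K_1 \cap L_1 L_2) \xrightarrow{\ \sim\ } W/(L_1 L_2), \qquad kL_1 \mapsto k(L_1 L_2).
\]
Symmetrically, $K_2 \cap L_1 L_2 = L_2$ and $K_2 L_1 = W$ yield $K_2/L_2 \xrightarrow{\sim} W/(L_1 L_2)$. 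Composing these gives the abstract isomorphism $K_1/L_1 \cong K_2/L_2$.

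The point that makes the centralizer statement work is that both of these isomorphisms are $G$-equivariant: conjugation by $g \in G$ carries $kL_i$ to ${}^g k\, L_i$ and $k(L_1 L_2)$ to ${}^g k\,(L_1 L_2)$, so each map $k(L_\bullet) \mapsto k(L_1 L_2)$ intertwines the conjugation actions. Thus $K_1/L_1$, $W/(L_1 L_2)$ and $K_2/L_2$ are isomorphic as $G$-groups. Since $\CC_G(M/N)$ is precisely the kernel of the action homomorphism $G \to \Aut(M/N)$, equivariantly isomorphic factors have identical centralizers, and I conclude
\[
\CC_G(K_1/L_1) = \CC_G(W/(L_1 L_2)) = \CC_G(K_2/L_2).
\]

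I do not expect a genuine obstacle here; the difficulty is entirely one of bookkeeping. The two things that must be got right are, first, matching each intersection condition to the correct application of the isomorphism theorem (and checking $K_i(L_1 L_2) = W$ in each case), and second — the more important point — ensuring that the identifications are made as $G$-groups rather than merely as abstract groups, since the equality of centralizers rests on the equivariance of the connecting isomorphisms and would fail to follow from an abstract isomorphism alone.
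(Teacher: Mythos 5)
Your proposal is correct and takes essentially the same route as the paper: parts (i) and (ii) via an open normal subgroup $O \unlhd_o G$ with $L \le O$ and $K \not\le O$, making $K/(K \cap O)$ a finite normal factor, and part (iii) via the second isomorphism theorem applied to $W = K_1L_2 = K_2L_1$ over $L_1L_2$. Your equivariance argument for the centralizers is just a cleaner packaging of the paper's explicit commutator computation (which verifies directly that $[K_1,T] \le L_1$ if and only if $[W,T] \le L_1L_2$), so there is no substantive difference.
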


\begin{proof}Given normal subgroups $K$ and $L$ of $G$ such that $K > L$, then $L$ is an intersection of open normal subgroups of $G$, so there is an open normal subgroup $O$ of $G$ containing $L$ but not $K$.  Then $K \cap O \unlhd G$  and $K \cap O <_o K$.  In particular, $K/(K \cap O)$ is a finite normal factor of $G$.  This implies (i) and (ii).

For (iii), let $H = K_1K_2=K_1L_2=K_2L_1$.  Since $K_1 \cap L_1L_2 = L_1$, the kernel of the natural homomorphism from $K_1$ to $H/L_1L_2$ is exactly $L_1$; since $H = K_1L_2$, this homomorphism is also surjective.  Thus $K_1/L_1 \cong H/L_1L_2$.  Let $T$ be a subgroup of $G$.  If $[H,T] \le L_1L_2$ then $[K_1,T] \le L_1L_2 \cap K_1 = L_1$, and conversely if $[K_1,T] \le L_1$ then $[H,T]$ is contained in the normal closure of $L_1 \cup [L_2,T]$ (since $H = K_1L_2$) and thus in $L_1L_2$.  Hence $\CC_G(K_1/L_1) = \CC_G(H/L_1L_2)$.  Similarly $K_2/L_2 \cong H/L_1L_2$ and $\CC_G(K_2/L_2) = \CC_G(H/L_1L_2)$.
\end{proof}

We will also need some definitions and results from \cite{Rei}.

\begin{defn}The \emph{cosocle} or \emph{Mel'nikov subgroup} $\M(G)$ of $G$ is the intersection of all maximal open normal subgroups of $G$. \end{defn}

\begin{lem}[{see \cite[Lemma~2.2]{Rei} and its corollary; see also \cite{Zal}}]\label{melfin} Let $G$ be a just infinite profinite group, and let $H$ be an open subgroup of $G$.  Then $|H:\M(H)|$ is finite.\end{lem}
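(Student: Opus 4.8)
The plan is to prove the contrapositive in spirit: an open subgroup $H$ with $|H:\M(H)|$ infinite cannot lie inside a just infinite group. I would first record two easy observations. Since every nontrivial closed normal subgroup of the just infinite group $G$ is open, any two nontrivial subgroups normal in $G$ meet in an open, hence nontrivial, subgroup; so whenever I wish to reach a contradiction it suffices to exhibit two nontrivial $G$-invariant subgroups with trivial intersection. Secondly, $|H:\M(H)|$ is finite if and only if $H$ has only finitely many maximal open normal subgroups: finitely many of them intersect in a finite-index subgroup, while if $\M(H)$ is open then $H/\M(H)$ is finite and its maximal normal subgroups — equivalently the maximal open normal subgroups of $H$ — are finite in number. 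I would then reduce to $H$ normal in $G$: replacing $H$ by its core $K=\bigcap_{g\in G}H^g\leq_o G$, one checks that if $H$ has infinitely many maximal open normal subgroups then so does $K$. Indeed all but finitely many maximal open normal subgroups $M$ of $H$ satisfy $KM=H$, so $K\cap M$ is maximal open normal in $K$ with $K/(K\cap M)\cong H/M$; and for fixed $J=K\cap M$ the possible $M$ inject into the finite set of normal complements of $K/J$ in $H/J$, so infinitely many $M$ force infinitely many distinct $J$.

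So assume $K\unlhd G$ is open. Its cosocle $\M(K)$ is characteristic in $K$, hence normal in $G$; if $\M(K)\neq\triv$ then by just infiniteness it is open and we are done. The task is therefore to rule out $\M(K)=\triv$ with $K$ infinite. Writing the maximal open normal subgroups of $K$ as $M_i$ with simple quotients $K/M_i=:S_i$, it suffices to show there are only finitely many $M_i$; for then $\triv=\M(K)$ would be a finite intersection of open subgroups, forcing $K$ finite. I would bound separately the $M_i$ with $S_i$ nonabelian and those with $S_i$ abelian.

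For the nonabelian part, note that $G$ acts on this $G$-invariant family with finite orbits. Suppose it is infinite. The key point is a rigidity argument: distinct maximal open normal subgroups of $K$ are pairwise non-associated (for the top factors $K/M_1,K/M_2$ the conditions of the definition of association collapse to $M_1=M_2$), and, using that a normal subgroup of a finite direct product of nonabelian simple groups is a subproduct, an induction shows $\bigcap_{\alpha\in F}M_\alpha\not\subseteq M_\beta$ for any new index $\beta$, so that $K\to\prod_{\alpha\in F}S_\alpha$ is surjective for every finite $F$. Passing to the limit gives $K\cong\prod_\alpha S_\alpha$, an infinite product of nonabelian simple groups. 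Since $G$ permutes the direct factors (the minimal normal subgroups of $K$) with finite orbits, I can split them into two infinite $G$-invariant subfamilies to write $K=K_1\times K_2$ with $K_1,K_2$ nontrivial, normal in $G$ and $K_1\cap K_2=\triv$ — contradicting just infiniteness. Hence only finitely many $S_i$ are nonabelian. I expect this step to be the main obstacle, and in particular the subproduct rigidity underlying it, which is exactly where the good behaviour of association for nonabelian chief factors (Lemma~\ref{chieflem}(iii) and \cite{RW_Polish}) is used; note that this rigidity genuinely fails for abelian factors, which is why that case needs a different treatment.

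For the abelian part, $\overline{[K,K]}$ is characteristic, hence normal in $G$; if it is nontrivial it is open and $K^{\mathrm{ab}}$ is finite, leaving only finitely many abelian simple quotients. If $\overline{[K,K]}=\triv$ then $K$ is abelian with $\M(K)=\triv$; decomposing $K=\prod_p K_p$ over its ($G$-invariant) pro-$p$ components, two nontrivial components would again give disjoint subgroups normal in $G$, so $K$ is elementary abelian for a single prime $p$, and thus an infinite profinite $\bF_p[G/\CC_G(K)]$-module with $G/\CC_G(K)$ finite. Such a module is not hollow — over a finite-dimensional algebra, a module with simple head is finite-dimensional — so its Pontryagin dual has two distinct maximal submodules summing to the whole, yielding two nontrivial $G$-invariant subgroups of $K$ with trivial intersection, the final contradiction. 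Combining the two parts, $K$ has only finitely many maximal open normal subgroups, so $\M(K)$ is open; with the reduction of the first paragraph this proves the lemma.
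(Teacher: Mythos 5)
Your proposal is correct in substance, but note that the paper itself offers no proof of Lemma~\ref{melfin}: it is quoted from \cite[Lemma~2.2]{Rei} and its corollary (see also \cite{Zal}). So your blind reconstruction is necessarily a different route from the paper's, though it ends up close in spirit to the cited source, which likewise shows that failure of the lemma forces an open normal subgroup to be an infinite Cartesian product of finite simple groups, incompatible with just infiniteness. Your reduction to the core $K$ is sound: subgroups $M$ with $KM \neq H$ contain $K$ and so correspond to maximal normal subgroups of the finite group $H/K$, and for fixed $J = K \cap M$ the fibre of $M \mapsto J$ injects into the finitely many normal complements of $K/J$ in the finite group $H/J$. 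The nonabelian case rests on exactly the right rigidity fact (a subdirect product of finitely many nonabelian finite simple groups with pairwise distinct kernels is the full product), and the finiteness of $G$-orbits on the family $\{M_\alpha\}$ follows as you say from $\N_G(M_\alpha) \geq K \leq_o G$. Your abelian case is a genuinely pleasant alternative to the counting arguments in the literature: $\M(K)=\triv$ forces $\overline{K^p} = \triv$, so $K$ is an infinite profinite $\bF_p[G/\CC_G(K)]$-module with $G/\CC_G(K)$ finite; Nakayama with nilpotent radical shows a hollow module over a finite-dimensional algebra is cyclic, hence finite-dimensional, and dualizing the resulting decomposition $K^* = U + W$ gives $U^\perp \cap W^\perp = \triv$ with both annihilators nontrivial and normal in $G$.

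One step you elide deserves a line of justification: ``passing to the limit gives $K \cong \prod_\alpha S_\alpha$'' only yields a surjection $K \twoheadrightarrow \prod_\alpha S_\alpha$ with kernel $N = \bigcap_\alpha M_\alpha$, and $\M(K)=\triv$ does not give $N = \triv$, since the abelian-quotient maximal subgroups also enter $\M(K)$. The patch is immediate within your own framework: the family $\{M_\alpha\}$ is $G$-invariant, so $N \unlhd G$; if $N \neq \triv$ then $N$ is open by just infiniteness, so only finitely many subgroups of $K$ lie above $N$, contradicting the assumed infinitude of the family. Hence $N = \triv$, and your splitting of $\prod_\alpha S_\alpha$ into two disjoint infinite $G$-invariant subproducts (using that closed normal subgroups of such a product are subproducts) finishes the contradiction as claimed. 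With that one line inserted, the proof is complete.
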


\begin{defn}Given a profinite group $G$ and subgroup $H$, we define $\Ob_G(H)$ and $\Ob^*_G(H)$ as follows:
\[ \Ob_G(H) := H \cap \bigcap \{K \unlhd_o G \mid K \not\le H\}\]
\[ \Ob^*_G(H) := H \cap \bigcap \{K \leq_o G \mid H \le \N_G(K), \; K \not\le H\}. \]
Note that $\Ob_G(H)$ and $\Ob^*_G(H)$ have finite index in $H$ if and only if the relevant intersections are finite.\end{defn}

\begin{thm}[{\cite[Theorem~36]{BGJMS}, \cite[Theorem~A and Corollary~2.6]{Rei}}]\label{genob} Let $G$ be a just infinite profinite group, and let $H$ be an open subgroup of $G$.  Then $|G:\Ob_G(H)|$ is finite.  If $G$ is hereditarily just infinite, then $|G:\Ob^*_G(H)|$ is finite.\end{thm}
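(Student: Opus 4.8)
The plan is to reduce the whole theorem to a single nontriviality statement and then to extract that nontriviality from the just infinite hypothesis. Throughout write $R := \bigcap\{K \unlhd_o G : K \not\le H\}$, so that $\Ob_G(H) = H \cap R$ and $R \unlhd G$. Since $H \le_o G$, the index $|G:\Ob_G(H)|$ is finite if and only if $R$ is open, and by just infiniteness a normal subgroup of $G$ is open exactly when it is nontrivial. Moreover every member $K$ of the defining family is normal, so $K \le H$ if and only if $K \le C$, where $C := \mathrm{Core}_G(H) \unlhd_o G$; hence $R = \bigcap\{K \unlhd_o G : K \not\le C\}$ and I may as well assume $H = C$ is open normal. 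Thus the first assertion reduces entirely to proving $R \ne \triv$.

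The next step is to organize the family $\mathcal F := \{K \unlhd_o G : K \not\le C\}$ by its minimal elements. If $(K_i)$ is a descending chain in $\mathcal F$, each $K_i$ meets the clopen set $G \setminus C$, so by compactness $\bigcap_i K_i$ meets $G\setminus C$ as well; hence $\bigcap_i K_i \not\le C$, in particular it is nontrivial and therefore open, so it again lies in $\mathcal F$. By Zorn's lemma $\mathcal F$ has minimal elements, every member of $\mathcal F$ contains one, and consequently $R = \bigcap\{K_0 : K_0 \text{ minimal in } \mathcal F\}$. For a minimal $K_0$, every proper $G$-invariant open subgroup of $K_0$ lies in $C$, so $K_0\cap C$ is the unique maximal $G$-invariant open subgroup of $K_0$ and $K_0/(K_0\cap C)$ is a chief factor of $G$ isomorphic to the minimal normal subgroup $K_0C/C$ of the finite group $G/C$.

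Because $G/C$ is finite it has only finitely many minimal normal subgroups $\bar S_1,\dots,\bar S_r$ (with preimages $S_i$), so it remains to control, for each $i$, the minimal $K_0$ with $K_0C/C=\bar S_i$; writing $R_i$ for their intersection, $R=R_1\cap\dots\cap R_r$ is a finite intersection and it suffices that each $R_i\ne\triv$. If two distinct minimal elements $K_0,K_0'$ satisfy $K_0C/C=K_0'C/C=\bar S_i$, then $[K_0,K_0']\le K_0\cap K_0'\le C$, forcing $\bar S_i$ to be abelian; so over a nonabelian layer there is a unique minimal element and $R_i=K_0\not\le C$, with Lemma~\ref{chieflem} ensuring these nonabelian chief factors are well behaved. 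The genuinely delicate case is an abelian, hence irreducible $\bF_p[G/C]$-, chief factor $\bar S_i$: here there may be several minimal $G$-invariant supplements of $C$, and the point is that their intersection cannot fall into $C$. Irreducibility shows $R_iC/C$ equals either $\bar S_i$ (whence $R_i\not\le C$, as desired) or $\triv$, and I expect to exclude the latter using just infiniteness together with Lemma~\ref{melfin}: any two nontrivial normal subgroups of $G$ meet in finite index, and the finite index of each cosocle $\M(K_0)$ in $K_0$ bounds how deep a minimal supplement can reach, so the supplements cannot intersect down into $C$. This abelian finiteness step is the main obstacle, and it is precisely where the just infinite hypothesis is indispensable: in a merely residually finite group the analogous intersection can collapse, as it already does in a finite group such as $(\bZ/p)^2$.

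For the second assertion I would run the same argument inside $H$. Since $G$ is hereditarily just infinite and $H\le_o G$, the group $H$ is itself just infinite, and $\Ob^*_G(H)=\bigcap\{H\cap K\}$ is an intersection of open normal subgroups of $H$, the relevant $H\cap K$ being the traces of the $H$-invariant open subgroups $K\le_o G$ with $K\not\le H$ (those with $K\ge H$ contribute $H$ and may be discarded). Replacing ``normal in $G$'' by ``normalised by $H$'' throughout, and invoking Lemma~\ref{melfin} for $H$ in place of $G$, the minimal-element analysis above applies after one extra reduction: distinct $H$-invariant $K$ may share the same trace $H\cap K$, so I would first pass to a suitable finite quotient of $G$ to reduce to finitely many relevant $K$, and then conclude exactly as before that $\Ob^*_G(H)$ is nontrivial, hence of finite index in the just infinite group $H$, and therefore of finite index in $G$.
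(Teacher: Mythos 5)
You should first note that the paper does not prove Theorem~\ref{genob} at all: it is imported verbatim from \cite[Theorem~36]{BGJMS} and \cite[Theorem~A and Corollary~2.6]{Rei}, so there is no internal proof to compare against and your attempt must stand on its own. Up to a point it does: the reduction to $C = \mathrm{Core}_G(H)$, the compactness/Zorn argument producing minimal elements of $\mathcal{F} = \{K \unlhd_o G : K \not\le C\}$ (these are exactly the narrow subgroups of Section~3, with $K_0 \cap C = \M_G(K_0)$, so you have in effect rediscovered Lemma~\ref{narrowassoc}), and the uniqueness of the minimal element over a nonabelian layer via $[K_0,K_0'] \le K_0 \cap K_0' \le C$ are all correct. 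But the abelian case, which you explicitly defer (``I expect to exclude the latter\dots''), is the entire content of the theorem --- everything before it is formal given just infiniteness --- and, worse, the exclusion you propose is not merely unproven but impossible. If an abelian layer $\bar{S}_i$ carries two distinct minimal elements $K_0 \neq K_0'$, then neither contains the other, so $K_0 \cap K_0'$ is a normal subgroup properly below $K_0$ and hence $K_0 \cap K_0' \le C$ by minimality --- this is your own nonabelian-case computation --- and therefore $R_i \le C$ automatically. So the horn $R_iC/C = \triv$ is \emph{forced} whenever multiplicity occurs, and multiplicity does occur: for instance, if $G$ has normal subgroups $L < K$ with $K/L$ central of type $(p,p)$ and $C$ meets $K$ in one of the $p+1$ lines of $K/L$, then minimal elements of $\mathcal{F}$ beneath the remaining $p$ lines are pairwise distinct (two of them would lie in $U_j \cap U_{j'} = L \le C$) and all map onto the same minimal normal subgroup of $G/C$; such central $(p,p)$-sections are common in just infinite pro-$p$ groups, and just infinite branch groups supply incomparable normal subgroups wholesale.

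Consequently the real task is to show $R_i \neq \triv$ even though $R_i \le C$, i.e.\ to bound how far \emph{below} $C$ an a priori infinite family of minimal supplements can intersect, and neither of your tools does this. That any two nontrivial normal subgroups meet in an open subgroup is true but toothless here: the pairwise intersections are indeed open, but they already lie inside $C$, and pairwise bounds say nothing about the intersection of an infinite family (you have not even shown the set of minimal elements over a layer is finite). Lemma~\ref{melfin} makes each index $|K_0 : \M(K_0)|$ finite, but with no uniformity across the family, so no ``depth bound'' follows; this is precisely where the cited proofs in \cite{Rei} and \cite{BGJMS} have to do genuine work (including a separate treatment of the virtually abelian, i.e.\ $p$-adic space group, case, where infinitely many normal subgroups over abelian layers really arise and a lattice-theoretic argument is needed). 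The $\Ob^*$ half inherits the same defect and adds two more: the step ``pass to a suitable finite quotient of $G$ to reduce to finitely many relevant $K$'' presupposes exactly the finiteness being proven, and for $K \le_o G$ normalized by $H$ with $K \not\le H$ the trace $H \cap K$ can a priori lie arbitrarily deep in $H$, so no analogue of the hypothesis $K \not\le C$ survives the passage to traces without a further commutator-style argument that you have not supplied.
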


\begin{cor}\label{chiefin}Let $G$ be a just infinite profinite group and let $H$ be an open subgroup of $G$.  Then $K \le H$ for all but finitely many chief factors $K/L$ of $G$.\end{cor}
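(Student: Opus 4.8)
The plan is to show that every chief factor $K/L$ with $K \not\le H$ lies above a single fixed open normal subgroup, so that these chief factors are governed by a finite quotient. First I would reduce to the case that $H$ is normal: replacing $H$ by its core $\mathrm{core}_G(H) = \bigcap_g H^g$ (an open normal subgroup contained in $H$) can only enlarge the set of chief factors $K/L$ with $K \not\le H$, since $K \le \mathrm{core}_G(H)$ implies $K \le H$. Hence it suffices to treat open normal $H$.

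Next, by Theorem~\ref{genob} the subgroup $\Ob_G(H) = H \cap N$ has finite index in $G$, where $N := \bigcap\{J \unlhd_o G : J \not\le H\}$; since $N \ge \Ob_G(H)$, this makes $N$ open, and I obtain the dichotomy that every $J \unlhd_o G$ satisfies $J \le H$ or $J \ge N$ (the latter holding whenever $J \not\le H$, as then $J$ lies in the family defining $N$). Because $G$ is just infinite, every nontrivial normal subgroup is open; in particular the top $K$ of any chief factor $K/L$ is open, so if $K \not\le H$ then $K$ is an open normal subgroup not contained in $H$, whence $K \ge N$.

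The crux is to prove that every chief factor $K/L$ with $K \not\le H$ in fact has $L \ge \Ob_G(H)$. Here I would examine $LN$, which lies between $L$ and $K$ since $N \le K$; by chiefness either $N \le L$, giving $L \ge N \ge \Ob_G(H)$, or $LN = K$. In the latter case $N$ covers $K/L$ and $N \cap L$ is a maximal $G$-invariant subgroup of $N$ with $N/(N\cap L) \cong K/L$. Applying the dichotomy to the open normal subgroup $L$ (the case $L = \triv$ being checked directly), either $L \ge N$, or $L \le H$; in the latter case $N \cap L \le H \cap N = \Ob_G(H)$, while $N \not\le H$ (forced by $LN = K \not\le H$) makes $\Ob_G(H)$ a \emph{proper} $G$-invariant subgroup of $N$. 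Maximality of the coatom $N \cap L$ then squeezes $N \cap L = \Ob_G(H)$, so again $L \ge \Ob_G(H)$.

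Once $L \ge \Ob_G(H)$ holds for all such chief factors, the conclusion is immediate: as $\Ob_G(H)$ is open and normal, the chief factors $K/L$ with $L \ge \Ob_G(H)$ are exactly the chief factors of the finite group $G/\Ob_G(H)$, of which there are only finitely many. The step I expect to be the main obstacle is the covering case $LN = K$, where a priori infinitely many chief factors might be associated to a single chief factor of $N$; the maximality argument pinning $N \cap L = \Ob_G(H)$ is precisely what excludes this, and it depends on $\Ob_G(H)$ being a single proper $G$-invariant subgroup sitting beneath the relevant coatom.
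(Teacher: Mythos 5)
Your proof is correct, but it takes a genuinely different route from the paper's. The paper's proof is a two-step counting argument: Theorem~\ref{genob} makes $\Ob_G(H)$ open, so every normal subgroup of $G$ not contained in $H$ contains the open intersection defining $\Ob_G(H)$ and hence there are only finitely many possible tops $K$; then, for each fixed top, the observation that $L \ge \M(K)$ (since chief factors are characteristically simple) together with Lemma~\ref{melfin} ($|K:\M(K)|$ finite) bounds the possible bottoms $L$. You instead prove a stronger, more localized statement: after replacing $H$ by its core, every chief factor $K/L$ with $K \not\le H$ satisfies $L \ge \Ob_G(H)$, so all exceptional chief factors are visible in the single finite quotient $G/\Ob_G(H)$. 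Your coatom argument is sound: when $LN = K$, the modular law gives that $N \cap L$ is indeed a maximal $G$-invariant subgroup of $N$ (via $N/(N \cap L) \cong K/L$), and squeezing it against the proper $G$-invariant subgroup $\Ob_G(H) = H \cap N$ forces $N \cap L = \Ob_G(H)$; moreover $L \neq \triv$ is automatic, since chief factors are finite while every nontrivial normal subgroup of a just infinite profinite group is open and hence infinite. What each approach buys: the paper's version is shorter because it outsources the bounding of the bottoms to Lemma~\ref{melfin}, itself a nontrivial fact about just infinite groups; your version dispenses with Lemma~\ref{melfin} entirely, relying only on Theorem~\ref{genob} plus elementary lattice manipulations, and in exchange yields the sharper conclusion that all chief factors $K/L$ with $K \not\le H$ occur above the one open normal subgroup $\Ob_G(\mathrm{core}_G(H))$, rather than merely being finite in number.
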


\begin{proof}By Theorem \ref{genob}, there are only finitely many normal subgroups of $G$ not contained in $H$.  In turn, if $K/L$ is a chief factor of $G$ then $L \ge \M(K)$, so by Lemma \ref{melfin}, the quotients of a given open normal subgroup of $G$ can only produce finitely many chief factors of $G$.\end{proof}

\section{Narrow subgroups}

The key idea in this paper is that of a `narrow' subgroup associated to a chief factor.  These are a general feature of profinite groups, but they have further properties that will be useful in establishing the just infinite property.  Throughout this section, $G$ will be a profinite group.

\begin{defn}Let $1 < A \unlhd G$, and define $\M_G(A)$ to be the intersection of all maximal open $G$-invariant subgroups of $A$.  Note that $\M(A) \le \M_G(A) < A$.  Say $A$ is \emph{narrow} in $G$ and write $A \nar G$ if there is a unique maximal $G$-invariant subgroup of $A$, in other words $\M_G(A)$ is the maximal $G$-invariant subgroup of $A$.

Note that if $A \nar G$, then $A/N \nar G/N$ and $\M_G(A/N) = \M_G(A)/N$ for any $N \unlhd G$ such that $N < A$.

Given $A \nar G$ and a chief factor $K/L$ of $G$, we will say $A$ is associated to $K/L$ to mean $A/\M_G(A)$ is associated to $K/L$.\end{defn}

\begin{lem}\label{critlem}Let $A$ and $K$ be normal subgroups of the profinite group $G$.
\begin{enumerate}[(i)]
\item We have $K\M_G(A) \ge A$ if and only if $K \ge A$.
\item Suppose $A$ is narrow in $G$ and that $KN \ge A$ for some proper $G$-invariant subgroup of $A$.  Then $K \ge A$.
\end{enumerate}
\end{lem}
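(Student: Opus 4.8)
The plan is to deduce both parts from a single Frattini-type ``non-generator'' property of $\M_G(A)$: if $K$ is a closed $G$-invariant subgroup of $A$ with $K\M_G(A) = A$, then $K = A$. Granting this, part (i) follows quickly. The implication $K \ge A \Rightarrow K\M_G(A) \ge A$ is immediate. For the converse, since $\M_G(A) \le A$ the Dedekind modular law gives $K\M_G(A) \cap A = (K \cap A)\M_G(A)$, so $K\M_G(A) \ge A$ is equivalent to $(K\cap A)\M_G(A) = A$; as $K \cap A$ is a closed $G$-invariant subgroup of $A$ (both $K$ and $A$ being normal in $G$), the non-generator property yields $K \cap A = A$, that is, $K \ge A$.

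The technical heart, and the step I expect to be the main obstacle, is proving the non-generator property, for which I would first show that every proper closed $G$-invariant subgroup $N$ of $A$ is contained in some maximal \emph{open} $G$-invariant subgroup of $A$. The key observation is that the subgroups $O \cap A$ with $O \unlhd_o G$ are open, $G$-invariant, and cofinal among the open normal subgroups of $A$, hence form a base of neighbourhoods of $1$; consequently $N = \bigcap_O N(O \cap A)$, and since $N < A$ there is some $O$ with $N(O \cap A)$ a proper open $G$-invariant subgroup of $A$ containing $N$. Passing to the finite quotient $A/(O \cap A)$, the proper $G$-invariant image of $N$ lies in a maximal $G$-invariant subgroup, whose preimage $M$ is a maximal open $G$-invariant subgroup of $A$ containing $N$. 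Applying this with $N = K$: if $K < A$ then $K \le M$ for such an $M$, so $K\M_G(A) \le M < A$ (as $\M_G(A) \le M$ by definition), contradicting $K\M_G(A) = A$; hence $K = A$.

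Finally, this argument also shows that the maximal $G$-invariant subgroups of $A$ are exactly the maximal open ones, so when $A \nar G$ the unique maximal $G$-invariant subgroup is $\M_G(A)$ and every proper closed $G$-invariant subgroup lies inside it. Part (ii) is then immediate: given a proper $G$-invariant subgroup $N$ with $KN \ge A$, we have $N \le \M_G(A)$, whence $A \le KN \le K\M_G(A)$, and part (i) gives $K \ge A$.
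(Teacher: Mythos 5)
Your proposal is correct and follows essentially the same route as the paper: both parts rest on the Dedekind modular law together with the fact that every proper $G$-invariant subgroup of $A$ lies in a maximal open $G$-invariant subgroup, which necessarily contains $\M_G(A)$. The only difference is presentational: you spell out the existence of such maximal subgroups (via the neighbourhood base $\{O \cap A \mid O \unlhd_o G\}$ and finite quotients) and package part (i) as a Frattini-type non-generator property, whereas the paper takes that existence as standard and argues the contrapositive of (i) directly.
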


\begin{proof}
(i)
Suppose $K \ngeq A$.  Then $K \cap A$ is contained in a maximal $G$-invariant subgroup $R$ of $A$.  By the modular law, $K\M_G(A) \cap A = \M_G(A)(K \cap A) \le R < A$, so $K\M_G(A) \ngeq A$.  The converse is clear.

(ii)
Since $A$ is narrow in $G$, the group $\M_G(A)$ contains every proper $G$-invariant subgroup of $A$.  In particular, $\M_G(A) \ge N$, so $K\M_G(A) \ge KN \ge A$ and hence $K \ge A$ by part (i).\end{proof}

The existence of narrow subgroups in profinite groups is shown by a compactness argument.

\begin{lem}\label{narrowassoc}Given any chief factor $K/L$ of $G$, there is a narrow subgroup $A$ of $G$ associated to $K/L$.  Those $A \nar G$ associated to $K/L$ are precisely those narrow subgroups of $G$ contained in $K$ but not $L$, and it follows in this case that $A \cap L = \M_G(A)$.  In particular, every nontrivial normal subgroup of $G$ contains a narrow subgroup of $G$.\end{lem}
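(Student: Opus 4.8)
The plan is to obtain the associated narrow subgroup by a minimality argument, to identify which narrow subgroups arise this way by juggling the modular law, and to deduce the final clause from Lemma~\ref{chieflem}(i). For existence, I would consider the collection $\mathcal{S}$ of closed $G$-invariant subgroups $B$ of $K$ with $B \not\le L$, ordered by reverse inclusion; it is nonempty since $K \in \mathcal{S}$. To apply Zorn's lemma I must check that the intersection $A = \bigcap_i B_i$ of a descending chain in $\mathcal{S}$ again lies in $\mathcal{S}$, the only nonformal point being $A \not\le L$. This is where compactness enters: for a descending chain of closed (hence compact) subgroups one has $(\bigcap_i B_i)L = \bigcap_i (B_i L)$, which I would prove by applying the finite-intersection property to the nested nonempty closed sets $B_i \cap xL$. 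Since $K/L$ is finite by Lemma~\ref{chieflem}(ii), the images $B_i L/L$ form a descending chain of subgroups of a finite group and so stabilize at a nontrivial subgroup; hence $\bigcap_i (B_i L) > L$ and $A \not\le L$. Zorn's lemma then produces a minimal element $A$ of $\mathcal{S}$.

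Next I would check that this minimal $A$ is narrow with $\M_G(A) = A \cap L$. By minimality every proper closed $G$-invariant subgroup $B$ of $A$ satisfies $B \le L$, hence $B \le A \cap L$; and $A \cap L$ is itself a proper $G$-invariant subgroup of $A$, proper because $A \not\le L$. Thus $A \cap L$ is the unique maximal $G$-invariant subgroup of $A$, and since $A/(A \cap L) \cong AL/L \le K/L$ is finite it is open, so $\M_G(A) = A \cap L$ and $A \nar G$. As there is no $G$-invariant subgroup strictly between $L$ and $K$, the nontrivial $G$-invariant subgroup $AL/L$ of $K/L$ equals $K/L$, giving $AL = K$; verifying the three association identities with $(K_1,L_1) = (A, A \cap L)$ and $(K_2,L_2) = (K,L)$ is then a direct computation, so $A$ is associated to $K/L$.

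For the characterization, suppose $A \nar G$ is associated to $K/L$. The association identities give $AL = AK = K\M_G(A)$, so $A \le K\M_G(A)$ and the modular law yields $A = (A \cap K)\M_G(A)$. Were $A \not\le K$, the subgroup $A \cap K$ would be a proper $G$-invariant subgroup of $A$, hence contained in $\M_G(A)$, forcing $A = \M_G(A)$, a contradiction; so $A \le K$, and then $AL = AK = K > L$ forces $A \not\le L$. Conversely, if $A \nar G$ with $A \le K$ and $A \not\le L$, then $A \cap L$ is a proper $G$-invariant subgroup of $A$, so $A \cap L \le \M_G(A)$; a symmetric modular-law argument (again using $AL = K$) shows that $\M_G(A)L$ is a proper $G$-invariant subgroup of $K$ containing $L$, whence $\M_G(A) \le L$ and so $\M_G(A) \le A \cap L$. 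Thus $\M_G(A) = A \cap L$ and $A$ is associated to $K/L$ as before. Finally, for the `in particular' clause I would apply Lemma~\ref{chieflem}(i) to a nontrivial normal subgroup $N$ (with $L = \triv$) to obtain a chief factor $N/M$, and then take any narrow subgroup associated to it; by the characterization just proved it lies in $N$.

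I expect the main obstacle to be the compactness step guaranteeing $A \not\le L$ for the intersection over a chain, i.e.\ the identity $(\bigcap_i B_i)L = \bigcap_i (B_i L)$; once minimality of $A$ is secured, everything else is bookkeeping with the modular law and the defining minimality of the chief factor.
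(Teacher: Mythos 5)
Your proposal is correct and follows essentially the same route as the paper: Zorn's lemma applied to the normal subgroups of $G$ contained in $K$ but not in $L$, with a compactness argument to show the intersection of a chain stays out of $L$, and modular-law bookkeeping for the characterization and the association identities. The only cosmetic differences are that the paper handles the chain step by noting $K \setminus L$ is compact (since $L$ is open in $K$, the chief factor being finite) rather than via your coset-based finite-intersection identity $(\bigcap_i B_i)L = \bigcap_i (B_i L)$, and that your inline modular-law arguments in the characterization reprove what the paper delegates to Lemma~\ref{critlem}(i).
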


\begin{proof}Suppose $A \nar G$ and $A$ is associated to $K/L$.  Then $K\M_G(A) \ge A$, so $K \ge A$ by Lemma \ref{critlem}.  Also $AL \ge K$, so $A \not\le L$.  Conversely, let $A \nar G$ such that $A \le K$ and $A \not\le L$.  Then $AL = K$ since $K/L$ is a chief factor of $G$, and clearly then $AL = K(A \cap L)$ and $(A \cap L)L < AK$.  To show $A \cap L = \M_G(A)$ and that $A/\M_G(A)$ is associated to $K/L$, it remains to show that $A/A \cap L$ is a chief factor of $G$: this is the case as any $G/A \cap L$-invariant subgroup of $A/A \cap L$ would correspond via the isomorphism theorems to a $G/L$-invariant subgroup of $AL/L = K/L$.

It remains to show that narrow subgroups with the specified properties exist.  Let $\mc{K}(G,K)$ be the set of normal subgroups of $G$ contained in $K$ and let $\mc{D} = \mc{K}(G,K) \setminus \mc{K}(G,L)$.  Given that $K \setminus L$ is compact, one sees that the intersection of any descending chain in $\mc{D}$ is not contained in $L$, and is thus an element of $\mc{D}$.  Hence $\mc{D}$ has a minimal element $A$ by Zorn's lemma.  Now any normal subgroup of $G$ properly contained in $A$ must be contained in $L$ by the minimality of $A$ in $\mc{D}$.  Thus $A \cap L$ is the unique maximal $G$-invariant subgroup of $A$ and so $A \nar G$.\end{proof}

We now define a relation on chief factors which gives the intuition underlying the rest of this paper.

\begin{defn}Given chief factors $a = K_1/L_1$ and $b = K_2/L_2$, say $a \gar b$ if $L_1 \ge K_2$ and $\M_{G/L_2}(K_1/L_2) = L_1/L_2$ (in particular, $K_1/L_2 \nar G/L_2$).\end{defn}

\begin{prop}\label{transprop}
\begin{enumerate}[(i)]  
\item Let $K_1/L_1$ and $K_2/L_2$ be chief factors of $G$.  Let $N$ be a normal subgroup of $G$ that covers $K_1/L_1$ and suppose $K_1/L_1 \gar K_2/L_2$.  Then $N$ covers $K_2/L_2$.
\item The relation $\gar$ is a strict partial order.
\item Suppose $K_1/L_1 \gar K_2/L_2 \gar \dots$ is a descending sequence of open chief factors of $G$ such that $\bigcap_i L_i = \triv$.  Then $K_1 \nar G$.\end{enumerate}
\end{prop}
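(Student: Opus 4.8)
The plan is to establish the three parts in order, using Lemma~\ref{critlem}(ii) as the main tool for (i), a two-case analysis for the transitivity in (ii), and a compactness argument together with (ii) for (iii). Throughout I would reduce to a suitable quotient $G/L_i$ so that the relevant narrow subgroup sits directly inside the ambient group.

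For (i), I would pass to $\bar G = G/L_2$ and write $\bar H$ for the image of a subgroup $H \le G$. The hypothesis $K_1/L_1 \gar K_2/L_2$ says exactly that $\bar K_1 = K_1/L_2$ is narrow in $\bar G$ with $\M_{\bar G}(\bar K_1) = \bar L_1$, while $N$ covering $K_1/L_1$ becomes $\bar N \bar L_1 \ge \bar K_1$. Applying Lemma~\ref{critlem}(ii) with $A = \bar K_1$ and the proper $G$-invariant subgroup $\bar L_1$ then yields $\bar N \ge \bar K_1$. Since $L_1 \ge K_2$ gives $\bar K_1 \ge \bar K_2$, we get $\bar N \ge \bar K_2$, i.e.\ $NL_2 \ge K_2$, which is precisely the assertion that $N$ covers $K_2/L_2$.

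For (ii), irreflexivity is immediate, as $a \gar a$ would force $L_1 \ge K_1$. The work is transitivity: given $a \gar b \gar c$ with $a = K_1/L_1$, $b = K_2/L_2$, $c = K_3/L_3$, the chain $L_1 \ge K_2 \ge L_2 \ge K_3$ settles $L_1 \ge K_3$, and it remains to show that $L_1$ is the \emph{unique} maximal $G$-invariant subgroup of $K_1$ containing $L_3$ (this being the meaning of $\M_{G/L_3}(K_1/L_3) = L_1/L_3$). Since $L_1$ is one such subgroup, I would take an arbitrary maximal $G$-invariant $R$ with $L_3 \le R < K_1$ and show $R = L_1$. Here is the crux, and where both narrowness hypotheses are needed: by maximality, the $G$-invariant subgroup $RL_2 \le K_1$ is either $R$ (so $L_2 \le R$) or all of $K_1$. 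If $L_2 \le R$, then $R/L_2$ is a proper $G/L_2$-invariant subgroup of $K_1/L_2$, so narrowness of $K_1/L_2$ (from $a \gar b$) gives $R \le L_1$, hence $R = L_1$. If instead $RL_2 = K_1$, the modular law gives $(R \cap K_2)L_2 = K_2 \cap RL_2 = K_2$, while $L_2 \not\le R$ forces $R \cap K_2 < K_2$; thus $R \cap K_2$ is a proper $G$-invariant subgroup of $K_2$ containing $L_3$, so narrowness of $K_2/L_3$ (from $b \gar c$) gives $R \cap K_2 \le L_2$ and hence $K_2 = (R\cap K_2)L_2 = L_2$, a contradiction. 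So only the first case occurs and $R = L_1$.

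For (iii), transitivity gives $K_1/L_1 \gar K_n/L_n$ for all $n$, so $L_1$ is the unique maximal $G$-invariant subgroup of $K_1$ containing $L_n$, for each $n$. Now let $M$ be any maximal $G$-invariant subgroup of $K_1$; then $K_1/M$ is a chief factor, hence finite by Lemma~\ref{chieflem}(ii), so $M$ is open in $G$. The $L_n$ descend with $\bigcap_n L_n = \triv$, so the closed sets $L_n \cap (G \setminus M)$ descend inside the compact set $G \setminus M$ with empty total intersection; by compactness some $L_n \le M$, and then $M = L_1$ by the previous sentence, proving $K_1 \nar G$. I expect the case analysis in (ii) to be the main obstacle, since it is the only point where the two separate narrowness hypotheses must be combined; parts (i) and (iii) are comparatively direct once (ii) is in hand.
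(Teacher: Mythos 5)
Your proposal is correct and takes essentially the same approach as the paper: Lemma~\ref{critlem} applied in the quotient $G/L_2$ for (i), the dichotomy $RL_2 = R$ or $RL_2 = K_1$ for a competing maximal $G$-invariant subgroup $R$ in the transitivity argument, and a compactness argument combined with (ii) for (iii). The only cosmetic differences are that in the case $RL_2 = K_1$ you intersect $R$ with $K_2$ and use the modular law to reach the contradiction $K_2 = L_2$, where the paper instead applies Lemma~\ref{critlem} to get $R \ge K_2$ and hence $R = K_1$, and in (iii) you use compactness to place some $L_n$ inside the given maximal subgroup before invoking uniqueness, rather than first forcing $NL_i = K_1$ for all $i$.
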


\begin{proof}(i) We have $\M_{G/L_2}(K_1/L_2) = L_1/L_2$, so $NL_2/L_2 \ge K_1/L_2$ by Lemma \ref{critlem}, that is, $NL_2 \ge K_1$.  As $K_2 \le K_1$, this implies that $N$ covers $K_2/L_2$.

(ii) It is clear that $\gar$ is antisymmetric and antireflexive.  Suppose $K_1/L_1 \gar K_2/L_2$ and $K_2/L_2 \gar K_3/L_3$.  It remains to show that $L_1$ is the unique maximal proper $G$-invariant subgroup of $K_1$ containing $L_3$.  Suppose there is another such subgroup $R$.  Since $M_{G/L_2}(K_1/L_2) = L_1/L_2$ and $R$ is not contained in $L_1$, we must have $RL_2 = K_1$.  In particular, $RL_2 \ge K_2$.  Since $L_2/L_3 = \M_{G/L_3}(K_2/L_3)$, we have $R/L_3 \ge K_2/L_3$ by Lemma \ref{critlem}, that is $R \ge K_2$.  But then $L_2 \le R$, so in fact $R = K_1$, a contradiction.

(iii) It follows from (ii) that $K_1/L_i \nar G/L_i$ for all $i$.  Suppose there is a maximal proper $G$-invariant subgroup $N$ of $K_1$ other than $L_1$.  Then $N \not\le L_1$, so $NL_i \not\le L_1$ for all $i$.  Since $K_1/L_i \nar G/L_i$ with $\M_{G/L_i}(K_1/L_i) = L_1/L_i$, this forces $NL_i = K_1$ for all $i$.  It follows by a standard compactness argument that $N = K_1$, a contradiction.\end{proof}

The relation $\gar$ can be used to obtain some restrictions on the just infinite images of a profinite group.

\begin{thm}\label{narchthm}Let $G$ be a profinite group.
\begin{enumerate}[(i)]  
\item Suppose $K_1/L_1 \gar K_2/L_2 \gar \dots$ is a descending sequence of open chief factors of $G$ and let $L = \bigcap_i L_i$.  Then there is a normal subgroup $K\ge L$ of $G$ such that $G/K$ is just infinite and such that for all $i$, $K$ does not cover $K_i/L_i$.  Indeed, it suffices for $K \ge L$ to be maximal subject to not covering the factors $K_i/L_i$.
\item Every just infinite image $G/K$ of $G$ arises in the manner described in (i) with $K=L$.\end{enumerate}
\end{thm}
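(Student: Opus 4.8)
The plan is to treat the two implications separately, leaning on the narrow machinery of Section~3 throughout.

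For (i) I would first pass to $G/L$ and reduce to the case $L = \bigcap_i L_i = \triv$; the relation $\gar$ and the chief factors $K_i/L_i$ survive this quotient unchanged, and $\bigcap_i L_i = \triv$ forces $\bigcap_i K_i = \triv$ as well, since $L_i \ge K_{i+1} \ge L_{i+1}$. The candidate $K$ is a maximal element of the family $\mc S$ of normal subgroups of $G$ covering none of the factors $K_i/L_i$. Such a maximal element exists by Zorn's lemma, the only point being that the closure of an ascending chain in $\mc S$ again lies in $\mc S$; this is a compactness argument run in each finite quotient $G/L_i$, where the images of the chain stabilise and so cannot come to cover $K_i/L_i$. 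It then remains to show that any such maximal $K$ gives a just infinite quotient.

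There are two things to verify. First, $G/K$ is infinite: assuming it finite, the descending chain $KK_1 \ge KK_2 \ge \dots \ge K$ of normal subgroups stabilises, so $KK_m = KK_{m+1}$ for some $m$; since $K$ covers no factor one has $K \cap K_i \le L_i$ for every $i$ (by the modular law, as $KL_i \cap K_i = L_i$), and substituting this into the modular-law identity $K_m = (K \cap K_m)K_{m+1}$ gives $K_m \le L_m$, contradicting that $K_m/L_m$ is a chief factor. Second, every normal $N \gneq K$ is open: by maximality $N$ covers some $K_{i_0}/L_{i_0}$, i.e. $NL_{i_0} \ge K_{i_0}$, whence $(N \cap K_{i_0})L_{i_0} = K_{i_0}$ again by the modular law. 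Proposition~\ref{transprop}(iii) applied to the tail $K_{i_0}/L_{i_0} \gar K_{i_0+1}/L_{i_0+1} \gar \dots$ (whose bottoms still intersect trivially) shows $K_{i_0} \nar G$, so Lemma~\ref{critlem}(ii) forces $N \cap K_{i_0} \ge K_{i_0}$; thus $N$ contains the open subgroup $K_{i_0}$ and is itself open. Hence $G/K$ is just infinite, and $K$ covers none of the factors by construction.

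For the converse (ii), after quotienting by $K$ I may assume $G$ is just infinite and must produce open chief factors $K_1/L_1 \gar K_2/L_2 \gar \dots$ with $\bigcap_i L_i = \triv$. Two simplifications make this manageable. The intersection condition is automatic: for a strictly descending chain of open chief factors, $\bigcap_i L_i$ is a normal subgroup that cannot be open (else the chain of finite quotients would stabilise), hence trivial since $G$ is just infinite. Moreover Proposition~\ref{transprop}(iii) shows that in any such chain each $K_i$ is narrow in $G$ with $L_i = \M_G(K_i)$; conversely, by the quotient-stability of narrowness recorded after its definition, if $K_n \nar G$ and $K_{n+1} \le \M_G(K_n)$ then $K_n/L_n \gar K_{n+1}/L_{n+1}$ holds automatically (writing $L_j = \M_G(K_j)$). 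So the construction reduces to producing a strictly descending sequence of open narrow subgroups $K_1 > K_2 > \dots$ with $K_{n+1} \le \M_G(K_n)$, and for this I would repeatedly invoke the key point that below any open normal subgroup $U$ of $G$ there is an open narrow subgroup $A \nar G$ with $A \le U$ (taking $U = \M_G(K_n)$, which is open because $|K_n : \M(K_n)|$ is finite by Lemma~\ref{melfin}).

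The hard part is exactly this last point, the cofinal existence of open narrow subgroups, and it is where just-infiniteness is indispensable. Lemma~\ref{narrowassoc} produces a narrow subgroup associated to any chief factor whose top lies in $U$ (such factors exist by Corollary~\ref{chiefin}), but the subgroup so obtained need not be open. The plan is to upgrade it to an open one by controlling the finitely many maximal $G$-invariant subgroups of the relevant open normal subgroups (finiteness coming once more from $|A : \M_G(A)| < \infty$, via Lemma~\ref{melfin}) and ruling out, with Theorem~\ref{genob} and Corollary~\ref{chiefin}, the scenario in which every open normal subgroup below $U$ splits into several chief factors at the top: such a persistent splitting would manufacture a normal subgroup of infinite index, against the just infinite hypothesis. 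Once this key point is secured, the inductive construction proceeds unimpeded and (ii) follows.
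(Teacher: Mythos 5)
Part (i) of your proposal is correct and is essentially the paper's own argument: the same Zorn's lemma family of normal subgroups covering none of the $K_i/L_i$ (with the same finite-quotient stabilisation argument for chains), and the same use of Proposition~\ref{transprop}(iii) together with Lemma~\ref{critlem} to show that any normal subgroup properly above a maximal $K$ must contain some open $K_{i_0}$. Your modular-law contradiction for $|G:K|=\infty$ is a harmless variant of the paper's direct observation that $KL_1 > KL_2 > \dots$ is strictly descending. The skeleton of your part (ii) also matches the paper: reduce to $G$ just infinite, note that $\bigcap_i L_i = \triv$ is automatic, and reduce to building a descending chain of narrow subgroups with $K_{n+1} \le \M_G(K_n)$, where $\M_G(K_n)$ is open via Lemma~\ref{melfin}.

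Your final paragraph, however, mis-diagnoses a ``hard part'' that does not exist, and the sketch you offer in its place is not an argument. A narrow subgroup is by definition a nontrivial (closed) normal subgroup of $G$, and $G$ is now just infinite, so every such subgroup has finite index and is therefore open: the narrow subgroup $A \le U$ supplied by Lemma~\ref{narrowassoc} is \emph{automatically} open, and no upgrading is needed. Just-infiniteness enters precisely through this definitional fact (together with Lemma~\ref{melfin}, which you correctly use to make $\M_G(A) \ge \M(A)$ open), not through any control of maximal $G$-invariant subgroups. Your proposed repair --- bounding the maximal $G$-invariant subgroups and ``ruling out persistent splitting'' via Theorem~\ref{genob} and Corollary~\ref{chiefin} --- is left entirely unexecuted, and it is unclear what the splitting scenario would be or how it would produce a normal subgroup of infinite index; had the openness issue been real, your proof of (ii) would be incomplete at exactly this point. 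Replacing that paragraph with the one-line observation above closes the gap, and the resulting argument coincides with the paper's proof of (ii), which at each stage simply takes a narrow subgroup of $G/K$ associated to a chief factor lying inside the previous $L_i$ and lets just-infiniteness supply all openness for free.
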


\begin{proof}(i) Let $\mc{N}$ be the set of all normal subgroups of $G$ which contain $L$ and which do not cover $K_i/L_i$ for any $i$, let $\mc{C}$ be a chain in $\mc{N}$, let $R = \overline{\bigcup \mc{C}}$ and let $i \in \bN$.  Then $RL_i = (\bigcup \mc{C})L_i = CL_i$ for some $C \in \mc{C}$, since $L_i$ is already open and in particular of finite index in $G$; so $RL_i$ does not contain $K_i$, which ensures $R \in \mc{N}$.  Hence $\mc{N}$ has a maximal element $K$ by Zorn's lemma.  Since $K$ does not cover any of the factors $K_i/L_i$, we have $KL_1 > KL_2 > KL_3 > \dots$ and so $|G:K|$ is infinite.  Let $P$ be a normal subgroup of $G$ properly containing $K$.  Then $P$ covers $K_i/L_i$ for some $i$ by the maximality of $K$.  Moreover $M_{G/K}(K_i/L) = L_iL/L$ by Proposition \ref{transprop} and $P \ge L$, so $P \ge K_i$ by Lemma \ref{critlem}.  In particular $|G:P| \le |G:K_i| < \infty$.  Hence $G/K$ is just infinite.

(ii) By Lemma \ref{narrowassoc}, there are narrow subgroups of $G/K$ associated to every chief factor of $G/K$.  Let $R/S$ be a chief factor of $G$ such that $S \ge K$ and let $K_1/K$ be a narrow subgroup of $G/K$ associated to $R/S$, with $\M_{G/K}(K_1/K) = L_1/K$.  Thereafter we choose $K_{i+1}/K$ to be a narrow subgroup associated to a chief factor $R/S$ such that $S \ge K$ and $R \le L_i$.  It is clear that the chief factors $K_i/L_i$ will have the required properties.\end{proof}

\section{Characterizations of the just infinite property and control over chief factors}

Here is a universal inverse limit construction for just infinite profinite groups along the lines of Theorem \ref{introthm}, also incorporating some information about chief factors.

\begin{thm}\label{mainjithm}Let $G$ be a just infinite profinite group.  Let $\mc{C}_1,\mc{C}_2,\dots$ be a sequence of classes of finite groups, such that $G$ has infinitely many chief factors in $\mc{C}_n$ for all $n$.  Then $G$ is the limit of an inverse system $\Lambda = \{(G_n)_{n > 0},\rho_n: G_{n+1} \twoheadrightarrow G_n\}$ as follows:

Each $G_n$ has a specified nontrivial normal subgroup $A_n$ such that, setting $P_n = \rho_{n}(A_{n+1})$:
\begin{enumerate}[(i)]  
\item $\M_{G_{n+1}}(A_{n+1}) \ge \ker\rho_{n} \ge P_{n+1}$;
\item Each normal subgroup of $G_n$ contains $P_n$ or is contained in $A_n$ (or both);
\item $P_n$ is a minimal normal subgroup of $G_n$;
\item $P_n \in \mc{C}_n$ for all $n$.
\end{enumerate}

Conversely, any inverse system satisfying conditions (i) and (ii) (for some choice of nontrivial normal subgroups $A_n$) for all but finitely many $n$ has a limit that is just infinite.\end{thm}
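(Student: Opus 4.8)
The plan is to prove the harder converse direction. Write $\pi_n : G \to G_n$ for the projection maps. For a closed normal subgroup $N \unlhd G$ one has $G/N \cong \varprojlim G_n/\pi_n(N)$, so $|G:N| = \sup_n |G_n:\pi_n(N)|$, and since $\pi_n(N) = \rho_n(\pi_{n+1}(N))$ the sequence $|G_n:\pi_n(N)|$ is nondecreasing; thus $N$ has finite index iff this sequence is bounded. As the hypotheses hold for all but finitely many $n$, I work throughout at indices large enough that (i) and (ii) apply at all relevant places.

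First I would extract the basic consequences of (i). For large $n$ we have $\ker\rho_n \le \M_{G_{n+1}}(A_{n+1}) < A_{n+1}$, so $\ker\rho_n \le A_{n+1}$ and hence $\rho_n^{-1}(P_n) = A_{n+1}\ker\rho_n = A_{n+1}$; dually (i) at index $n-1$ gives $P_n \le \M_{G_n}(A_n) < A_n$. In particular each $P_n$ is proper in $A_n$, and $P_{n+1} \ne 1$ (otherwise $A_{n+2} \le \ker\rho_{n+1} < A_{n+2}$); since $\ker\rho_n \ge P_{n+1}$, the maps $\rho_n$ are eventually non-injective and $G$ is infinite.

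The engine is an upward cascade: if $\pi_m(N) \ge P_m$ for some large $m$, then $\pi_{m+1}(N) \ge A_{m+1}$. Indeed $\rho_m(\pi_{m+1}(N)) = \pi_m(N) \ge P_m = \rho_m(A_{m+1})$ together with $\ker\rho_m \le A_{m+1}$ gives $\pi_{m+1}(N)\ker\rho_m \ge A_{m+1}$, and as $\ker\rho_m \le \M_{G_{m+1}}(A_{m+1})$, Lemma~\ref{critlem}(i) upgrades this to $\pi_{m+1}(N) \ge A_{m+1}$; since $A_{m+1} \ge P_{m+1}$ the hypothesis propagates, so $\pi_k(N) \ge A_k$ for all $k > m$. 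Now fix $1 \ne N \unlhd G$, so $\pi_k(N) \ne 1$ for all large $k$, and by (ii) each such $\pi_k(N)$ either contains $P_k$ or lies in $A_k$. If $\pi_k(N) \ge P_k$ for even one large $k$, the cascade yields $\pi_k(N) \ge A_k$ for all large $k$. Otherwise $\pi_k(N) \le A_k$ for all large $k$, whence $\pi_{k-1}(N) = \rho_{k-1}(\pi_k(N)) \le \rho_{k-1}(A_k) = P_{k-1} \le \ker\rho_{k-2}$, forcing $\pi_{k-2}(N) = 1$ for all large $k$, contradicting $N \ne 1$; so this case is excluded.

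The main obstacle is to convert $\pi_k(N) \ge A_k$ into \emph{bounded} index, since $|G_k:A_k|$ itself tends to infinity, so the containment $\pi_k(N) \ge A_k$ alone is insufficient. The crux is that the index in fact stabilizes: for large $m$, writing $M = \pi_{m+1}(N)$ with $\rho_m(M) = \pi_m(N)$, the standard formula $|G_{m+1}:M| = |G_m:\pi_m(N)| \cdot |\ker\rho_m : \ker\rho_m \cap M|$ holds, and because $\ker\rho_m \le \M_{G_{m+1}}(A_{m+1}) \le A_{m+1} \le M$ by (i), the correction factor $|\ker\rho_m : \ker\rho_m \cap M|$ equals $1$. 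Hence $|G_k:\pi_k(N)|$ is eventually constant, $N$ has finite index, and $G$ is just infinite. I expect this vanishing of the correction factor — equivalently $\ker\rho_m \le \pi_{m+1}(N)$, which is exactly where the lower bound $\M_{G_{n+1}}(A_{n+1}) \ge \ker\rho_n$ built into (i) is indispensable — to be the decisive and least routine step.
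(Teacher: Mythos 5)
Your converse argument is correct, and it follows essentially the same route as the paper's own proof of that half: the dichotomy supplied by (ii), the upgrade from $\pi_{m+1}(N)\ker\rho_m \ge A_{m+1}$ to $\pi_{m+1}(N) \ge A_{m+1}$ via $\ker\rho_m \le \M_{G_{m+1}}(A_{m+1})$ and Lemma~\ref{critlem}, and the exclusion of the case $\pi_k(N) \le A_k$ for all large $k$ using $\rho_{k-1}(A_k) = P_{k-1} \le \ker\rho_{k-2}$ are exactly the paper's steps. The only differences are cosmetic: the paper applies Lemma~\ref{critlem} to $\pi_{n+1}(N) \cap A_{n+1}$ rather than to $\pi_{n+1}(N)$ itself, and it concludes openness of $N$ directly from $\ker\rho_n \le \pi_{n+1}(N)$ for all $n \ge n_1$ (giving $N \ge \ker\pi_{n_1}$, since $N$ is closed) rather than via your stabilization of the indices $|G_k : \pi_k(N)|$; these are equivalent. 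Your identification of $\ker\rho_n \le \M_{G_{n+1}}(A_{n+1})$ as the load-bearing part of (i) is also accurate --- this is precisely the condition that was missing from the original published version and that this revision inserts.

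The genuine gap is that you have proved only half the theorem. The statement is a biconditional, and the forward direction --- that \emph{every} just infinite profinite group $G$, with prescribed classes $\mc{C}_n$ each accounting for infinitely many chief factors, arises as the limit of a system satisfying (i)--(iv) --- is not addressed at all in your proposal, and it is not a routine verification; it is the ``universal'' content of the theorem and, contrary to your framing, the harder half. The paper's proof constructs a descending chain $(K_n)$ of narrow subgroups of $G$: given $K_n$, set $L = \Ob_G(\M_G(K_n))$, which is open by Lemma~\ref{melfin} and Theorem~\ref{genob} (this is where the just infinite hypothesis enters, through the finiteness of obliquity-type intersections); by Corollary~\ref{chiefin} all but finitely many chief factors $R/S$ of $G$ satisfy $R \le L$, so one can choose such an $R/S$ in $\mc{C}_n$ and, by Lemma~\ref{narrowassoc}, a narrow subgroup $K_{n+1} \nar G$ associated to it. Setting $G_n = G/\M_G(K_{n+1})$, $A_n = K_n/\M_G(K_{n+1})$ and $P_n = K_{n+1}/\M_G(K_{n+1})$, the containment $K_{n+1} \le \Ob_G(\M_G(K_n))$ is what delivers condition (ii), narrowness of $K_{n+1}$ delivers (i) and (iii), and the choice of $R/S$ delivers (iv); one must also observe that the resulting inverse limit is an infinite quotient of $G$ and hence equals $G$ because $G$ is just infinite. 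None of the machinery of narrow subgroups appears in your proposal, so as it stands the ``universality'' claim of the theorem remains unproven.
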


\begin{proof}Suppose that $G$ is just infinite with the specified chief factors.  We will obtain an infinite descending chain $(K_n)_{n \ge 0}$ of narrow subgroups of $G$, and then use these to construct the required inverse system.

Let $K_0 = G$.  Suppose $K_n$ has been chosen, and let $L= \Ob_G(\M_G(K_n))$.  Then by Theorem \ref{genob}, $L$ is open in $G$, and hence by Corollary \ref{chiefin}, all but finitely many chief factors $R/S$ of $G$ satisfy $R \le L$; note also that $L$ is a proper subgroup of $K_n$.  Let $R/S$ be a chief factor such that $R \le L$ and $R/S \in \mc{C}_n$, and let $K_{n+1}$ be a narrow subgroup of $G$ associated to $R/S$.  Then $K_{n+1} \le L$ and $K_{n+1}/\M_G(K_{n+1}) \cong R/S$.  Since $K_{n+1} \le L$, every normal subgroup of $G$ contains $K_{n+1}$ or is contained in $\M_G(K_n)$ (or both).

Set $G_n = G/\M_G(K_{n+1})$, set $P_n = K_{n+1}/\M_G(K_{n+1})$, set $A_n = K_{n}/\M_G(K_{n+1})$ and let the maps $\rho_n$ be the natural quotient maps.  The inverse limit of $(G_n,\rho_n)$ is then an infinite quotient of $G$, which is then equal to $G$ since $G$ is just infinite.  Given a normal subgroup $N/\M_G(K_{n+1})$ of $G_n$ such that $N/\M_G(K_{n+1}) \nleq A_{n}$, then $N \nleq K_{n}$, so $N \ge K_{n+1}$ and hence $N/\M_G(K_{n+1}) \ge P_n$, so condition (ii) is satisfied.  The other conditions are clear.

Now suppose we are given an inverse system satisfying (i) and (ii) for $n \ge n_0>1$, with inverse limit $G$.  Let $\pi_n: G \rightarrow G_n$ be the surjections associated to the inverse limit.  Note that condition (i) ensures that the groups $\pi\inv_n(A_n)$ form a descending chain of open normal subgroups of $G$; since $P_{n+1} \le \ker\rho_n$, we see that $\pi\inv_{n+2}(A_{n+2}) \le \ker\pi_n$, and consequently the subgroups $\pi\inv_n(A_n)$ have trivial intersection.  The fact that $A_{n}$ is nontrivial ensures that $\pi\inv_n(A_n)$ is nontrivial; since the descending chain $(\pi\inv_n(A_n))$ has trivial intersection, $G$ must be infinite.  Let $N$ be a nontrivial normal subgroup of $G$.  Then there is some $n_1 \ge n_0$ such that for all $n \ge n_1$, $N$ is not contained in $\pi\inv(A_n)$.  By (ii) it follows that $\pi_n(N)$ contains $P_n$; hence $\rho_n(\pi_{n+1}(N) \cap A_{n+1})=P_n$ and thus
\[
A_{n+1} = (\pi_{n+1}(N) \cap A_{n+1})\ker\rho_n = (\pi_{n+1}(N) \cap A_{n+1})\M_{G_{n+1}}(A_{n+1}),
\]
using (i).  We conclude by Lemma~\ref{critlem} that $\pi_{n+1}(N) \ge A_{n+1}$.  Hence $\pi_{n+1}(N) \ge \ker\rho_{n}$ for all $n \ge n_1$, so in fact $N \ge \ker\pi_{n_1}$.  In particular, $N$ is open in $G$, showing that $G$ is just infinite.
\end{proof}

\begin{proof}[Proof of Theorem~\ref{introthm}]
Let $G$ be a just infinite profinite group and let $A_n$ and $P_n$ be as obtained in Theorem~\ref{mainjithm}.  Conditions (i) and (iii) of Theorem~\ref{mainjithm} together imply that $A_{n+1} > \M_{G_{n+1}}(A_{n+1}) = \ker\rho_{n}$ and that $\ker\rho_n$ is a maximal $G_{n+1}$-invariant subgroup of $A_{n+1}$, so $A_{n+1}$ is a narrow normal subgroup of $G_{n+1}$, as required for condition (ii) of the present theorem.  The other two conditions are now clear.  Thus $G$ is the limit of an inverse system satisfying the given conditions.

Conversely, let $\Lambda = \{(G_n)_{n > 0},\rho_n: G_{n+1} \twoheadrightarrow G_n\}$ be an inverse system of finite groups with subgroups $A_n$ and $P_n$ of $G_n$ satisfying the given conditions, and let $G = \varprojlim G_n$.  Conditions (i) and (ii) together of the present theorem imply condition (i) of Theorem~\ref{mainjithm}, and condition (iii) of the present theorem is condition (ii) of Theorem~\ref{mainjithm}.  Thus $G$ is just infinite by Theorem~\ref{mainjithm}.
\end{proof}

\begin{rem}
A similar statement was given in the original published version of this article (\cite[Theorem~4.1]{ReidInvLim}).  However, the statement and proof given in \cite{ReidInvLim} fail to take sufficient account of the kernels of the maps $\rho_n$.  We rectify this issue here and in the results derived from Theorem~\ref{mainjithm} by explicitly imposing a condition on $\ker\rho_n$.
\end{rem}

If $G$ is pronilpotent, then all chief factors of $G_n$ are central.  But if $G$ is not virtually pronilpotent, it could be helpful to replace condition (ii) of Theorem \ref{mainjithm} with the stronger but easier-to-verify condition that $A_n$ contains the centralizer of the chief factor $P_n/\M_G(P_n)$.  Indeed, this can always be arranged, with some inevitable adjustments to the classes $\mc{C}_n$.

\begin{defn}Given a profinite group $G$ and a prime $p$, write $\OO^p(G)$ for the intersection of all normal subgroups of $G$ of $p$-power index.\end{defn}

\begin{lem}\label{opsch}Let $G$ be a finite group and let $p$ be a prime.  Suppose that all chief factors of $G$ of exponent $p$ are central, and that $p$ does not divide the order of the Schur multiplier of any nonabelian composition factor of $G$.  Then $\OO^p(G)$ has no composition factors of order $p$.\end{lem}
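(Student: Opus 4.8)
The plan is to recast the conclusion in terms of chief factors and then prove the resulting statement by a minimal counterexample argument, using the generalized Fitting subgroup. First I would reformulate: choosing a chief series of $G$ through $\OO^p(G)$ and applying the Jordan--Hölder theorem, the composition factors of the normal subgroup $\OO^p(G)$ are exactly those obtained by decomposing the $G$-chief factors $K/L$ with $K \le \OO^p(G)$. A nonabelian chief factor contributes only nonabelian composition factors, while an elementary abelian $q$-chief factor contributes copies of $C_q$; hence $\OO^p(G)$ has a composition factor of order $p$ if and only if $G$ has a chief factor $K/L$ of exponent $p$ with $K \le \OO^p(G)$. So it suffices to show no such chief factor exists.

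Suppose for contradiction that $K/L$ is such a factor. By the first hypothesis it is central. Passing to $G/L$, which inherits both hypotheses (its chief factors and its composition factors forming subcollections of those of $G$), and noting that a central minimal normal subgroup of exponent $p$ has order $p$, I reduce to the following crux: \emph{if $G$ is finite and satisfies the two hypotheses, and $Z \le \Z(G)$ has order $p$ with $Z \le \OO^p(G)$, then a contradiction ensues.} For intuition, since $\OO^p(G)$ is generated by $p'$-elements it has no nontrivial $p$-quotient, so its abelianization is a $p'$-group and $Z \le [\OO^p(G),\OO^p(G)]$; thus $Z$ is genuinely hidden inside the commutator subgroup, which is precisely where the multiplier hypothesis must intervene.

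I would prove the crux by taking a minimal counterexample $G$. If some minimal normal subgroup $M$ satisfies $M \cap Z = 1$, then $ZM/M$ is a central subgroup of order $p$ of $G/M$ lying in $\OO^p(G/M) = \OO^p(G)M/M$, contradicting minimality; hence every minimal normal subgroup contains $Z$, so $Z$ is the unique minimal normal subgroup and $G$ is monolithic. Consequently $\OO_{p'}(G) = 1$. Moreover, for any component $L$ of $G$ the centre $\Z(L)$ is a quotient of the Schur multiplier $M(L/\Z(L))$ of a nonabelian composition factor of $G$, hence a $p'$-group by the second hypothesis; so $\Z(E(G))$ is a $p'$-group and cannot contain $Z$. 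Since a nontrivial $E(G)$ would force $Z \le \Z(E(G))$, I conclude $E(G) = 1$, and therefore $F^*(G) = F(G) = \OO_p(G)$.

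Now $\CC_G(\OO_p(G)) \le \OO_p(G)$ by the standard property of the generalized Fitting subgroup. Every $G$-chief factor inside the $p$-group $\OO_p(G)$ is elementary abelian, hence of exponent $p$, hence central by the first hypothesis; so $G$ acts trivially on each factor of a $G$-chief series of $\OO_p(G)$. By coprime action, any $p'$-element of $G$ acting trivially on all these factors acts trivially on $\OO_p(G)$, so $G/\CC_G(\OO_p(G))$ has no nontrivial $p'$-element and is a $p$-group; as $\CC_G(\OO_p(G)) \le \OO_p(G)$ is also a $p$-group, $G$ is itself a $p$-group. But then $\OO^p(G) = 1$, contradicting $Z \le \OO^p(G)$, which finishes the crux and hence the lemma. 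The main obstacle is the crux, and in particular locating where the two hypotheses are forced to act: after reducing to the monolithic case, a central $C_p$ trapped in $\OO^p(G)$ must appear either as a non-central $p$-chief factor, excluded by the first hypothesis, or inside the centre of a component, excluded by the second via the identification of component centres with sections of Schur multipliers. Arranging the reduction so that these are the only two possibilities, and reaching the $p$-group contradiction cleanly, is the heart of the argument.
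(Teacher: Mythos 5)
Your proof is correct, and it takes a genuinely different route from the paper's. The paper argues \emph{upwards} inside the given group, with no counterexample minimality and no generalized Fitting theory: it takes a normal subgroup $N$ of $G$ of largest order with $\OO^p(N) < N$, picks a minimal normal subgroup $K/N$ of $G/N$, and notes that the exponent-$p$ hypothesis makes $N/\OO^p(N)$ a $p$-group all of whose $G$-chief factors are central, so $K/\OO^p(N)$ is an iterated central extension of the chief factor $K/N$; if $K/N$ is abelian this quotient is nilpotent, while if $K/N$ is a direct power of a nonabelian simple group $S$, the coprimality of $p$ to the multiplier of $S$ forces $K/[K,K]\OO^p(N)$ to be a nontrivial $p$-group --- either way $\OO^p(K) < K$, contradicting maximality, so $N = G$, and the lemma follows by iterating via $\OO^p(\OO^p(G)) = \OO^p(G)$. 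You instead argue \emph{downwards} by minimal counterexample: after your (correct) reformulation in terms of exponent-$p$ chief factors below $\OO^p(G)$ and the reduction to a central $Z$ of order $p$ inside $\OO^p(G)$, you force $G$ to be monolithic with monolith $Z$, use the multiplier hypothesis to kill $E(G)$ (the centre of a component being a quotient of the multiplier of the corresponding composition factor), deduce that $F^*(G) = \OO_p(G)$ is self-centralizing, and then combine the exponent-$p$ hypothesis with the coprime stability lemma to conclude $G$ is a $p$-group, contradicting $1 \neq Z \le \OO^p(G)$. The two hypotheses enter at structurally parallel points in both arguments --- the exponent-$p$ condition to make $p$-sections hypercentral, the multiplier condition to prevent a central $p$-part from hiding above a perfect section --- but your version leans on heavier standard machinery ($F^*(G)$, the Bender-type inclusion $\CC_G(F^*(G)) \le F^*(G)$, and Hall's stability/coprime-action lemma), where the paper gets by with elementary facts about perfect central extensions. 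What your route buys in exchange: every reduction passes only to quotients $G/L$ and $G/M$, where both hypotheses are manifestly inherited (the paper's iteration step ``repeat the argument for $\OO^p(G)$'' instead needs the hypotheses for a normal subgroup), and the monolithic-plus-$F^*$ template makes each step a routine citation; the cost is the appeal to deeper structure theory that the paper's self-contained argument avoids.
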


\begin{proof}It suffices to show $\OO^p(G) < G$ whenever $G$ has a composition factor of order $p$, as then one can repeat the argument for $\OO^p(G)$ and obtain the conclusion from the fact that $\OO^p(\OO^p(G))=\OO^p(G)$.  Let $N$ be a normal subgroup of largest order such that $\OO^p(N) < N$.  Suppose $N < G$, and let $K/N$ be a minimal normal subgroup of $G/N$; note that every $G$-chief factor of $N/\OO^p(N)$ is central in $G$, so $K/\OO^p(N)$ is an iterated central extension of $K/N$.  If $K/N$ is abelian, then $[K,K] \leq N$, so $K/\OO^p(N)$ is nilpotent.  On the other hand, if $K/N$ is nonabelian, then it is a direct power of a nonabelian finite simple group $S$, such that the Schur multiplier of $S$ has order coprime to $p$; it follows that $K/[K,K]\OO^p(N)$ is a nontrivial $p$-group.  In either case $\OO^p(K) < K$, contradicting the choice of $N$.\end{proof}

We now specify a condition $\condb$ on classes of finite groups $\mc{C}$:

$\condb$ The class $\mc{C}$ consists of finite characteristically simple groups.  For each prime $p$, if $\mc{C}$ contains some elementary abelian $p$-group, then within the class of finite groups, $\mc{C}$ contains all elementary abelian $p$-groups and all direct powers of all finite simple groups $S$ such that $p$ divides the Schur multiplier of $S$.

\begin{lem}\label{seclem}Let $G$ be a just infinite profinite group that is not virtually pronilpotent and let $H$ be an open subgroup of $G$.  Let $\mc{C}$ be a class of groups satisfying $\condb$.  Let $\mc{D}$ be the set of chief factors of $G$ belonging to $\mc{C}$, and suppose $\mc{D}$ is infinite.  Then $K\CC_G(K/L) \le H$ for infinitely many $K/L \in \mc{D}$.\end{lem}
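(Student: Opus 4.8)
The plan is to argue by contradiction and reduce everything to a single application of Lemma~\ref{opsch} on the finite quotients of a well-chosen open subgroup. Two reductions come for free. Since $K$ and $\CC_G(K/L)$ are both normal in $G$, their product is a subgroup, and $K\CC_G(K/L)\le H$ holds precisely when $K\le H$ and $\CC_G(K/L)\le H$; by Corollary~\ref{chiefin} the first condition holds for all but finitely many chief factors, so it suffices to find infinitely many $K/L\in\mc{D}$ with $\CC_G(K/L)\le H$. By Theorem~\ref{genob} there are only finitely many normal subgroups of $G$ not contained in $H$; let $D$ be their intersection, an open normal subgroup, and note that every normal subgroup not contained in $H$ contains $D$. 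Suppose, for contradiction, that only finitely many $K/L\in\mc{D}$ satisfy $\CC_G(K/L)\le H$. Then for all but finitely many $K/L\in\mc{D}$ we have $\CC_G(K/L)\not\le H$, hence $\CC_G(K/L)\ge D$; that is, $D$ centralizes all but finitely many factors of $\mc{D}$.

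I first dispose of the nonabelian factors. If $K/L$ is nonabelian then its centre is trivial, so $\CC_G(K/L)\cap K=L$ and in particular $K\not\le\CC_G(K/L)$. For any fixed open subgroup $C$, Corollary~\ref{chiefin} shows that only finitely many chief factors satisfy $K\not\le C$; applying this to each of the finitely many possible centralizers $C\not\le H$, we find that only finitely many nonabelian factors of $\mc{D}$ have $\CC_G(K/L)\not\le H$. Hence if $\mc{D}$ had infinitely many nonabelian factors we would be done, and we may assume it has only finitely many. By the first clause of $\condb$, $\mc{C}$ contains every elementary abelian $p$-group for each prime $p$ it meets, so $\mc{D}$ contains every abelian chief factor of $G$ of such exponent; thus $\mc{D}$ has infinitely many abelian factors. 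By pigeonhole, either some prime $p$ occurs in infinitely many of them (the case I treat) or infinitely many primes each occur finitely often (a degenerate case I return to). Assume the former and fix such $p$. By the second clause of $\condb$, $\mc{C}$ also contains all direct powers of finite simple groups whose Schur multiplier is divisible by $p$; since $\mc{D}$ has only finitely many nonabelian factors, only finitely many chief factors of $G$ are powers of such simple groups.

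Now I choose the subgroup. Let $W\unlhd_o G$ with $W\le D$ lie below all the exceptional factors, i.e. below every exponent-$p$ chief factor not centralized by $D$, and below every chief factor that is a power of a simple group with Schur multiplier divisible by $p$ (finitely many in all, so such a $W$ exists). Then for every $G$-chief factor $K/L$ with $K\le W$: if it has exponent $p$ it is centralized by $D\ge W$, hence central in $W/L$; and if it is nonabelian, its composition factors $S$ satisfy $p\nmid|\text{Schur}(S)|$. Refining a $G$-chief series through $W$ into a $W$-chief series, it follows that every exponent-$p$ chief factor of $W$ is central in $W$ and that $p$ does not divide the order of the Schur multiplier of any nonabelian composition factor of $W$. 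Lemma~\ref{opsch} therefore applies to each finite quotient $W/U$ with $U\unlhd_o G$ and $U\le W$, giving that $\OO^p(W/U)$ has no composition factor of order $p$.

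The contradiction is now immediate. Since $G$, and hence its open subgroup $W$, is not virtually pronilpotent, $W$ is not pro-$p$, so $\OO^p(W)\neq\triv$ and is therefore open in the just infinite group $W$. By Corollary~\ref{chiefin}, all but finitely many of the infinitely many exponent-$p$ factors $K/L\in\mc{D}$ satisfy $K\le\OO^p(W)$; fix one such with $K\neq L$. Choosing $U\unlhd_o G$ with $U\le L$ we get $U\le\OO^p(W)$ and $\OO^p(W/U)=\OO^p(W)/U\ge K/U$, so $K/L$ is a nontrivial exponent-$p$ section of $\OO^p(W/U)$, contradicting the previous paragraph. This shows infinitely many $K/L\in\mc{D}$ satisfy $\CC_G(K/L)\le H$, as required. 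I expect the main obstacle to be the deployment of Lemma~\ref{opsch}: one must pass from hypotheses phrased for $G$-chief factors to hypotheses about the chief factors and composition factors of $W$ (the refinement step), and it is here that both clauses of $\condb$ are essential — the elementary abelian clause to ensure that the $D$-centralized abelian factors really exhaust the abelian part of $\mc{D}$, and the Schur-multiplier clause to secure the second hypothesis of Lemma~\ref{opsch}. A secondary difficulty is the degenerate case in which $\mc{D}$ meets infinitely many primes with only finitely many factors at each; I would treat this by the same mechanism, observing that the computation above in fact forces only finitely many exponent-$p$ factors at each prime in the bad case, and arguing that this in turn pushes $G$ into being virtually pronilpotent.
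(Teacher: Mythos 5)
Your main case --- some single prime $p$ accounting for infinitely many abelian factors of $\mc{D}$ --- is essentially correct and close in mechanism to the paper's proof: the reduction via Corollary~\ref{chiefin} and Theorem~\ref{genob}, the separate disposal of the case where $\mc{D}$ has infinitely many nonabelian factors (a nice touch not isolated in the paper), the choice of $W$ below the finitely many exceptional factors, and the application of Lemma~\ref{opsch} to finite quotients $W/U$ all go through, modulo one local slip: you justify the openness of $\OO^p(W)$ by calling $W$ ``just infinite'', but an open normal subgroup of a just infinite group need not be just infinite (that is the \emph{hereditarily} just infinite property); the correct argument is that $\OO^p(W)$ is characteristic in $W \unlhd G$, hence a nontrivial normal subgroup of the just infinite group $G$, hence open. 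The genuine gap is the degenerate case, which you name but do not prove: $\mc{D}$ may consist (cofinitely) of abelian factors spread over infinitely many primes with only finitely many at each prime. Nothing in the hypotheses excludes this --- a just infinite group that is not virtually pronilpotent can have only finitely many exponent-$p$ chief factors for every single $p$ while meeting infinitely many primes, and $\mc{C}$ could be the class of all elementary abelian groups. Your proposed patch asserts that finitely many exponent-$p$ factors at each prime ``pushes $G$ into being virtually pronilpotent'', but as a bare implication this is false (a hereditarily just infinite group with no abelian chief factors at all satisfies the hypothesis vacuously), and with the extra centralizer data it is precisely the nontrivial content that remains to be proved. Your single-prime mechanism breaks exactly here: with only finitely many exponent-$p$ factors there is no guarantee of one lying below $\OO^p(W)$, so no contradiction arises at any one prime.

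The paper avoids the case division over primes entirely. It sets $R = \bigcap_{K/L \in \mc{D}} \CC_G(K/L)$, open by Theorem~\ref{genob}, and --- this is the device your proof lacks --- takes $N$ to be the \emph{smallest} $G$-invariant subgroup of $R$ such that $R/N$ has a $G$-chief series with all factors in $\mc{D}$. Since $R$ centralizes every member of $\mc{D}$, the quotient $R/N$ is pronilpotent, so $N \neq \triv$ because $G$ is not virtually pronilpotent, and hence $N$ is open by just infiniteness. Corollary~\ref{chiefin} then supplies some $K/L \in \mc{D}$ with $K \le N$, necessarily abelian of some exponent $p$; Lemma~\ref{opsch} gives $\OO^p(N) < N$; and since $N/\OO^p(N)$ is pro-$p$, its $G$-chief factors are elementary abelian $p$-groups, which lie in $\mc{D}$ by $\condb$, so $R/\OO^p(N)$ again has a $\mc{D}$-series, contradicting the minimality of $N$. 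The prime $p$ is produced by the factor found below $N$ rather than fixed in advance, and the contradiction is with minimality rather than with a count of factors at one prime, so the argument is uniform in how the primes are distributed. To repair your proof you would either import this minimal-$N$ construction, or iterate your $\OO^p$ descent over an enumeration of all relevant primes and argue that the resulting chain, if it had trivial intersection, would give $W$ a $G$-chief series all of whose factors are central in $W$, forcing $W$ pronilpotent; but neither argument appears in your text, so as submitted the degenerate case is a real gap.
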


\begin{proof}By Corollary \ref{chiefin}, $K \le H$ for all but finitely many chief factors $K/L$ of $G$.  It thus suffices to assume that, for all but finitely many $K/L \in \mc{D}$, $\CC_G(K/L)$ is not contained in $H$, and derive a contradiction.  As $\CC_G(K/L)$ is a normal subgroup of $G$, by Theorem \ref{genob} there are only finitely many possibilities for the subgroups $\CC_G(K/L)$ that are not contained in $H$.  Thus $R = \bigcap_{K/L \in \mc{D}} \CC_G(K/L)$ has finite index in $G$.  Moreover, given $K/L \in \mc{D}$ such that $K \le R$, then $K/L$ is central in $R$ and in particular abelian.  Let $N$ be the smallest $G$-invariant subgroup of $R$ such that $R/N$ has a $G$-chief series whose factors are all in $\mc{D}$ (equivalently, the intersection of all such subgroups).  Then $R/N$ is pronilpotent, which means $G/N$ is virtually pronilpotent.

As $G$ itself is not virtually pronilpotent, it follows that $N$ has finite index in $G$, so there is some $K/L \in \mc{D}$ with $K \le N$.  As $N$ centralizes all such chief factors, $K/L$ is abelian, say of exponent $p$, while all nonabelian simple composition factors of $N$ have Schur multipliers of order coprime to $p$.  Thus, $\OO^p(N) < N$ by Lemma~\ref{opsch}.  As $\OO^p(N)$ is characteristic in $N$, it is normal in $G$.  But then $R/\OO^p(N)$ is an image of $R$ with a $G$-chief series whose factors are all in $\mc{D}$, contradicting the definition of $N$.\end{proof}

\begin{cor}\label{mainjithm:centralizers}Let $G$ be a just infinite profinite group that is not virtually pronilpotent, and let $\mc{C}_1,\mc{C}_2,\dots$ be a sequence of classes of finite groups, all accounting for infinitely many chief factors of $G$.  Suppose each class $\mc{C}_n$ also satisfies $\condb$.  Then $G$ is the limit of an inverse system of the form specified in Theorem~\ref{mainjithm}, with the additional condition that $\CC_{G_n}(P_n) < A_n$ for all $n$.\end{cor}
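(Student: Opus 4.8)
The plan is to re-run the construction in the proof of Theorem~\ref{mainjithm}, strengthening the choice of chief factor at each stage so as to force the centralizer condition. Recall that that construction builds a descending chain $K_0 = G > K_1 > K_2 > \cdots$ of narrow normal subgroups of $G$ and then sets $G_n = G/\M_G(K_{n+1})$, $P_n = K_{n+1}/\M_G(K_{n+1})$ and $A_n = K_n/\M_G(K_{n+1})$. At the inductive step, having chosen $K_n$, one puts $L = \Ob_G(\M_G(K_n))$, which is open by Theorem~\ref{genob} and proper in $K_n$, chooses a chief factor $R/S \in \mc{C}_n$ with $R \le L$, and takes $K_{n+1}$ to be a narrow subgroup of $G$ associated to $R/S$. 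The point is that there is freedom in the choice of $R/S$, and I will use Lemma~\ref{seclem} to exploit it.

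First I would note that since $G$ is just infinite, each $K_n$ is a nontrivial normal subgroup and hence open, so $L = \Ob_G(\M_G(K_n))$ is an open subgroup. The class $\mc{C}_n$ satisfies $\condb$ and, by hypothesis, accounts for infinitely many chief factors of $G$, so the set $\mc{D}$ of chief factors of $G$ lying in $\mc{C}_n$ is infinite. Since $G$ is not virtually pronilpotent, I may apply Lemma~\ref{seclem} with $H = L$, obtaining infinitely many $R/S \in \mc{D}$ with $R\CC_G(R/S) \le L$. I choose $R/S$ to be one of these. In particular $R \le L$, so the construction of Theorem~\ref{mainjithm} proceeds verbatim and produces an inverse system with all of the properties (i)--(iv) established there.

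It then remains to read off the extra condition $\CC_{G_n}(P_n) < A_n$. The key identity is $\CC_{G_n}(P_n) = \CC_G(R/S)/\M_G(K_{n+1})$: since $K_{n+1}$ is narrow and associated to $R/S$, the chief factor $P_n = K_{n+1}/\M_G(K_{n+1})$ of $G_n$ is associated to $R/S$, so $\CC_G(K_{n+1}/\M_G(K_{n+1})) = \CC_G(R/S)$ by Lemma~\ref{chieflem}(iii); moreover $\M_G(K_{n+1}) = K_{n+1} \cap S \le S \le \CC_G(R/S)$ by Lemma~\ref{narrowassoc}, so the quotient is meaningful. By the strengthened choice we have $\CC_G(R/S) \le L \le \M_G(K_n) < K_n$, and since $\M_G(K_{n+1}) \le K_{n+1} \le \M_G(K_n)$ this yields
\[
\CC_{G_n}(P_n) = \CC_G(R/S)/\M_G(K_{n+1}) \le \M_G(K_n)/\M_G(K_{n+1}) < K_n/\M_G(K_{n+1}) = A_n,
\]
the final inequality being strict because $\M_G(K_n) < K_n$. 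Hence $\CC_{G_n}(P_n) < A_n$ for all $n$, as required.

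I expect the only real subtlety to be the bookkeeping that makes the strengthened choice legitimate: one must confirm that $L = \Ob_G(\M_G(K_n))$ is genuinely open (so that Lemma~\ref{seclem} applies), and observe that the single containment $R\CC_G(R/S) \le L$ delivers \emph{both} $R \le L$, needed to keep the machinery of Theorem~\ref{mainjithm} running, \emph{and} $\CC_G(R/S) \le L < K_n$, needed for strictness. This is precisely where the hypothesis that $G$ is not virtually pronilpotent is used, since that is exactly what powers Lemma~\ref{seclem}.
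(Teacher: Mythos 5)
Your proposal is correct and follows essentially the same route as the paper's own proof: both exploit the freedom in the choice of the chief factor $R/S$ in the construction of Theorem~\ref{mainjithm} and invoke Lemma~\ref{seclem} (with $H = L = \Ob_G(\M_G(K_n))$) to arrange $R\CC_G(R/S) \le L$, then transfer the centralizer condition to $P_n$ via association. Your write-up merely makes explicit the steps the paper compresses into ``which implies $\CC_{G_n}(P_n) < A_n$'' --- namely the identification $\CC_{G_n}(P_n) = \CC_G(R/S)/\M_G(K_{n+1})$ via Lemma~\ref{chieflem}(iii) and Lemma~\ref{narrowassoc}, and the strictness from $L \le \M_G(K_n) < K_n$ --- all of which checks out.
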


\begin{proof}
In the construction of the inverse system in Theorem~\ref{mainjithm}, we were free to choose any chief factor $R/S$ such that $R \le L$ and $R/S \in \mc{C}_{n+1}$.    It follows immediately from Corollary~\ref{chiefin} and Lemma~\ref{seclem} that there will be a choice for which $\CC_G(R/S) \le L$ and hence $\CC_G(K/\M_G(K)) \le L$, where $K$ is the associated narrow subgroup, which implies $\CC_{G_n}(P_n) < A_n$ in the inverse limit construction.
\end{proof}

The need for a condition like $\condb$ becomes clear when one considers an iterated transitive wreath product $G$ of copies of some finite perfect group $P$ with nontrivial centre.  Although $G$ is just infinite, has infinitely many abelian chief factors and is not virtually pronilpotent, all abelian chief factors of $G$ are central factors.

\section{Characterizations of the hereditarily just infinite property}

We can adapt Theorem~\ref{mainjithm} (using an idea from \cite{Wil}) to obtain a universal construction for hereditarily just infinite groups.

\begin{defn}A finite group $H$ is a \emph{central product} of subgroups $\{H_i \mid i \in I\}$ if these subgroups generate $H$, and whenever $i \not=j$ then $[H_i,H_j]=1$.  If $H$ is a normal subgroup of a group $G$, we say $H$ is a \defbold{basal central product} in $G$ if in addition the groups $\{H_i \mid i \in I\}$ form a conjugacy class of subgroups in $G$.

Say $H$ is \emph{centrally indecomposable} if it cannot be expressed as a central product of proper subgroups, and \emph{basally centrally indecomposable in $G$} if it cannot be expressed as a basal central product in $G$.\end{defn}

\begin{thm}\label{mainjithm:hji}Let $G$ be a hereditarily just infinite profinite group.  Let $\mc{C}_1,\mc{C}_2,\dots$ be a sequence of classes of finite groups, such that $G$ has infinitely many chief factors in $\mc{C}_n$ for all $n$.  Then $G$ is the limit of an inverse system $\Lambda = \{(G_n)_{n > 0},\rho_n: G_{n+1} \twoheadrightarrow G_n\}$ as follows:

Each $G_n$ has a specified nontrivial normal subgroup $A_n$ such that, setting $P_n = \rho_{n}(A_{n+1})$:
\begin{enumerate}[(i)]  
\item $\M_{G_{n+1}}(A_{n+1}) \ge \ker\rho_{n} \ge P_{n+1}$;
\item Each normal subgroup of $G_n$ contains $P_n$ or is contained in $A_n$ (or both);
\item $P_n$ is a minimal normal subgroup of $G_n$;
\item $P_n \in \mc{C}_n$ for all $n$;
\item In $G_n$, every normal subgroup containing $A_n$ is basally centrally indecomposable.
\end{enumerate}

Conversely, any inverse system satisfying conditions (i) and (ii) for all but finitely many $n$ and (v) for infinitely many $n$ has a limit that is hereditarily just infinite.\end{thm}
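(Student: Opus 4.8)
The plan is to reduce both directions to Theorem~\ref{mainjithm} and then to account for condition (v) through the interplay between basal central products and normal subgroups of open subgroups. For the converse, conditions (i) and (ii), holding for all but finitely many $n$, already yield that $G=\varprojlim G_n$ is just infinite by Theorem~\ref{mainjithm}, so it suffices to show that every open subgroup $H$ is just infinite. Suppose instead that some open $H$ carries a nontrivial closed normal subgroup $M$ with $|H:M|=\infty$, and fix $n_0$ with $\ker\pi_{n_0}\le H$, where $\pi_n\colon G\to G_n$ are the limit projections. For $n>n_0$ the image $\pi_n(H)$ then has index at most $|G:H|$ in $G_n$ and contains $P_n$ (using $P_{n+1}\le\ker\rho_n$ from (i)), while $\pi_n(M)$ is normal in $\pi_n(H)$.

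The first observation is that the normal closure $\hat M=\langle M^G\rangle$ is a nontrivial normal subgroup of the just infinite group $G$, hence open. Consequently $\pi_n(\hat M)$ is normal in $G_n$ and is not contained in the proper subgroup $A_n$ for large $n$ (otherwise $\hat M$ would lie in the infinite-index subgroup $\pi_n\inv(A_n)$, as the subgroups $\pi_m\inv(A_m)$ descend to the identity as in the proof of Theorem~\ref{mainjithm}); so by (ii) we get $\pi_n(\hat M)\ge P_n$, and then the bootstrap argument of Theorem~\ref{mainjithm} (applicable because $\hat M$ is normal in $G$) upgrades this to $\pi_n(\hat M)\ge A_n$ for all large $n$. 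The second observation is that $\pi_n(M)$ fails to be normal in $G_n$ for infinitely many $n$: if it were normal for all large $n$, the same bootstrap would force $M\ge\ker\pi_{n_1}$ for some $n_1$, making $M$ open, contrary to $|H:M|=\infty$. Thus, at infinitely many levels $n$ where (v) also holds, $\pi_n(\hat M)\ge A_n$ is basally centrally indecomposable, yet $\pi_n(\hat M)=\langle\pi_n(M)^{G_n}\rangle$ is generated by the conjugacy class of $r\ge 2$ proper subgroups $\pi_n(M)^{g}$.

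To reach a contradiction it remains to arrange that these conjugates pairwise commute, for then $\pi_n(\hat M)$ is exhibited as a nontrivial basal central product, violating (v). This commuting is the crux of the argument and the step I expect to be the main obstacle: in general the conjugates of $\pi_n(M)$ need not commute, and one must choose $M$ well. The natural strategy is to replace $M$ by a subgroup supported on a single block of an imprimitivity system for the action of $G$ on the simple factors of the relevant chief factors below $A_n$, so that distinct $G$-translates of $M$ have disjoint support and hence centralise one another on every such chief factor; a compactness argument then promotes factor-by-factor commuting to genuine commuting in $G$. Making this choice, and verifying that the resulting block decomposition is nontrivial precisely because $M$ has infinite index (so that $M$ cannot cover cofinitely many chief factors, by Corollary~\ref{chiefin}), is exactly the technical core, and is the point at which the permutation-theoretic notion of subprimitivity (cf.\ Proposition~\ref{intro:subprim}) governs the outcome.

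For the forward direction, I would start from the inverse system produced by Theorem~\ref{mainjithm}, which already satisfies (i)--(iv), and then secure (v) by choosing the chief factors more carefully. Here the same duality is used in reverse: a nontrivial basal central product $N=\langle C_1,\dots,C_r\rangle\ge A_n$ gives an open subgroup $\N_G(C_1)$ in which $C_1$ is a nontrivial normal subgroup centralised by $C_2\cdots C_r$, the kind of configuration that the hereditarily just infinite hypothesis forbids. Using the finiteness of $\Ob^*_G(H)$ for open $H$ (Theorem~\ref{genob}), one shows that for all but finitely many chief factors the action on the simple factors admits no such nontrivial block decomposition, i.e.\ it is subprimitive; since each class $\mc{C}_n$ still accounts for infinitely many such factors, we may choose them so that every normal subgroup of $G_n$ containing $A_n$ is basally centrally indecomposable, giving (v). The remaining conditions are inherited unchanged from Theorem~\ref{mainjithm}.
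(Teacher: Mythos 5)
Both directions of your proposal contain genuine gaps, and in each case the missing idea is precisely the device the paper uses. For the converse, the step you yourself flag as ``the crux'' --- arranging that the conjugates of (a modification of) $M$ pairwise commute --- is not just the main obstacle but is unfixable by your proposed route: conditions (i)--(v) carry no subprimitivity hypothesis (that notion belongs to Theorem~\ref{primhji}, not here), and the relevant chief factors may all be abelian (e.g.\ $G$ a hereditarily just infinite pro-$p$ group), so there are no ``simple factors'' admitting an imprimitivity block structure to support your choice of $M$; in general a subgroup normal in an open subgroup has conjugates that do not commute, and no compactness argument changes that. What the paper invokes instead is Wilson's result \cite[2.1]{Wil}: if a just infinite group that is not virtually abelian fails to be hereditarily just infinite, then it already possesses a nontrivial subgroup $V$ of \emph{infinite index} whose distinct conjugates centralize each other (the virtually abelian case being elementary). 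Given such a $V$, one takes $U = \langle V^G \rangle$, notes that for all but finitely many $n$ one has $\pi_n^{-1}(A_n) \le U$ and $\pi_n(V) < \pi_n(U)$, and picks a level where (v) holds: then $\pi_n(U)$ is a proper basal central product, a direct contradiction, with no commuting to be manufactured. Your surrounding bookkeeping (the bootstrap giving $\pi_n(\hat M) \ge A_n$, and the non-normality of $\pi_n(M)$ at infinitely many levels) is sound but idle without this lemma; it is the genuine content of the direction.

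The forward direction has an analogous gap. Your claim that ``for all but finitely many chief factors the action on the simple factors admits no nontrivial block decomposition, i.e.\ it is subprimitive'' is not a consequence of the hereditarily just infinite property, and in any case (v) is a statement about normal subgroups of the finite quotients $G_n$, not about permutation actions on chief factors. The paper's mechanism is different: when $G$ is not virtually abelian, at each step set $M = \Ob^*_G(\M(K_n))$ --- open by Theorem~\ref{genob}, and it is exactly here that the hereditary hypothesis enters, since $\pi_n^{-1}(V)$ is normalized only by the open subgroup $K_n$, not by $G$ --- and choose the next narrow subgroup $K_{n+1}$ inside a normal subgroup $L$ of $G$ \emph{properly contained in $[M,M]$}. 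Then any $V$ generating a putative basal central decomposition of some $U \ge A_n$ is normalized by $A_n$, forcing $\pi_n^{-1}(gVg^{-1}) \ge \Ob^*_G(\M(K_n))$ for every $g$, so all conjugates of $V$ share the common subgroup $\pi_n(M)$, which is \emph{nonabelian} precisely because $L < [M,M]$; this contradicts the conjugates centralizing one another. Finally, the virtually abelian case ($N = \CC_G(N) \cong \bZ_p$ the maximal abelian open normal subgroup) requires a separate construction, choosing $L_n$ and $K_{n+1}$ so that $[\Ob_G(\M(K_n)),\langle g \rangle] \ge L_n$ and $[L_n,\langle g \rangle] \ge K_{n+1}$ for all $g \in G \setminus N$; your sketch omits this case entirely, and note that the theorem's conclusion (and condition (v)) must hold there too.
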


\begin{proof}Suppose that $G$ is hereditarily just infinite with the specified chief factors.  We will obtain an infinite descending chain $(K_n)_{n \ge 0}$ of narrow subgroups of $G$ as in the proof of Theorem~\ref{mainjithm}, additionally ensuring that (v) will be satisfied.

Suppose that $G$ is not virtually abelian; note that in this case, $|H:[H,H]|$ is finite for every nontrivial normal subgroup $H$ of $G$.  Let $K_0 = G$.  Suppose $K_n$ has been chosen for some $n \ge 0$; let $M = \Ob^*_G(\M(K_n))$, and let $L$ be a normal subgroup of $G$ properly contained in $[M,M]$.  As in the proof of Theorem~\ref{mainjithm}, there is a chief factor $R/S$ of $G$ such that $R \le L$ and $R/S \in \mc{C}_n$, and $K_{n+1} \nar G$ associated to $R/S$.  Then $K_{n+1} \le L$ and $K_{n+1}/\M_G(K_{n+1}) \cong R/S$.  Set $G_n = G/\M_G(K_{n+1})$, set $P_n = K_{n+1}/\M_G(K_{n+1})$, set $A_n = K_{n}/\M_G(K_{n+1})$ and let the maps $\rho_n$ be the natural quotient maps.    

If instead $G$ is virtually abelian, then it is easy to see that in fact $G$ has a unique largest abelian open normal subgroup $N$, such that $\CC_G(N) = N \cong \bZ_p$ for some prime $p$.  Note that $\mc{C}_n$ must contain the cyclic group of order $p$.  Let $K_0 = N$ and suppose $K_n$ has been chosen for some $n \ge 0$.  Then there are open subgroups $L_n$ and $K_{n+1}$ of $K_n$ such that for all $g \in G \setminus N$ we have
\[
[\Ob_G(\M(K_n)),\langle g \rangle] \ge L_n \text{ and } [L_n,\langle g \rangle] \ge K_{n+1}.
\]
(If $G = N$, we simply take $K_{n+1} = \M(K_n)$.)  We then define $G_n$, $P_n$, $A_n$ and $\rho_n$ as before.

In both cases, the inverse limit of $(G_n,\rho_n)$ is an infinite quotient of $G$, which is in fact equal to $G$ since $G$ is just infinite.  Conditions (i)--(iv) are satisfied as in the proof of Theorem~\ref{mainjithm}.  Let $\{ \pi_n: G \rightarrow G_n \mid n > 0\}$ be the surjections associated to the inverse limit.  We now prove by contradiction that (v) is satisfied.  Let $U$ be a normal subgroup of $G_n$ containing $A_n$, and suppose $U$ is a proper basal central product in $G_n$: say $U$ is the normal closure of $V$, where $V < U$ and the distinct $G_n$-conjugates of $V$ centralize each other.  Note that $V$ is normal in $U$ and hence normalized by $A_n$; thus $\pi\inv_n(V)$ is normalized by $K_n$.

If $G$ is not virtually abelian, then $V$ is not contained in $\M(K_n)$; it follows that $\pi\inv_n(V) \ge \Ob^*_G(\M(K_n))$.  By the same argument, $\pi\inv_n(gVg\inv) \ge \Ob^*_G(\M(K_n))$ for every $g \in G$.  Thus all $G_n$-conjugates of $V$ contain the group $\pi_n(\Ob^*_G(\M(K_n)))$, which is nonabelian as a consequence of the choice of $L$.  If $G$ is virtually abelian, the existence of a nontrivial decomposition ensures that $U$ is not cyclic of prime power order, so $\pi\inv_n(U) \not\le N$.  Thus $\pi\inv_n(U)$ contains $K_n$ and $\pi\inv_n(V)$ contains some $g \in G \setminus N$.  We then have $\pi\inv_n(V) \ge [K_n,\langle g \rangle] \ge L_n$; indeed $\pi\inv_n(hVh\inv) \ge L_n$ for all $h \in G_n$.  The choice of $L_n$ ensures that $L_n/\M_G(K_{n+1})$ is a noncentral subgroup of $hVh\inv$ for all $h \in G_n$.  In either case we obtain a subgroup of $\bigcap_{h \in G_n}hVh\inv$ that is not centralized by $V$, contradicting the hypothesis that the conjugates of $V$ form a basal central decomposition of $U$.  Thus (v) holds for all $n > 0$.

\

Conversely, suppose we are given an inverse system satisfying (i) and (ii) for all $n \ge n_0>1$, and also (v) for infinitely many $n$.  Let $G$ be the inverse limit and let $\pi_n: G \rightarrow G_n$ be the surjections associated to the inverse limit.  Then $G$ is just infinite by Theorem~\ref{mainjithm}.  As in the proof of Theorem~\ref{mainjithm}, the groups $\pi\inv(A_n)$ for $n \ge n_0$ form a descending chain of open normal subgroups of $G$ with trivial intersection.  If $G$ is not hereditarily just infinite, then it would have a nontrivial subgroup $V$ of infinite index such that the distinct conjugates of $V$ centralize each other: this is clear if $G$ is virtually abelian, and given by \cite[2.1]{Wil} if $G$ is not virtually abelian.  Let $U$ be the normal closure of $V$ in $G$.  Then for all but finitely many $n$, we see that $\pi\inv_n(A_n) \le U$ and $\pi_n(V) < \pi_n(U)$, so there exists $n$ such that (v) holds, $A_n \le \pi_n(U)$ and $\pi_n(V) < \pi_n(U)$.  But then $\pi_n(U)$ is the normal closure of $\pi_n(V)$ in $G_n$, and the distinct conjugates of $\pi_n(V)$ centralize each other, contradicting (v).  From this contradiction, we conclude that $G$ is hereditarily just infinite.
\end{proof}

We derive a construction from \cite{Wil} as a special case.  (Actually the conclusion is slightly stronger than stated in \cite{Wil}: the inverse limit is hereditarily just infinite, even if it is virtually abelian.)

\begin{thm}[{See Wilson \cite[2.2]{Wil}}]\label{wilthm} Let $G$ be the inverse limit of a sequence $(H_n)_{n \geq 0}$ of finite groups and surjective homomorphisms $\phi_n: H_n \twoheadrightarrow H_{n-1}$.  For each $n \geq 1$ write $K_n = \ker\phi_n$, and suppose that for all $L \unlhd H_n$ such that $L \nleq K_n$ the following assertions hold:
\begin{enumerate}[(i)]  
\item $K_n < L$;
\item $L$ has no proper subgroup whose distinct $H_n$-conjugates centralize each other and generate $L$.
\end{enumerate}
Then $G$ is a hereditarily just infinite profinite group.\end{thm}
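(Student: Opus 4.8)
The plan is to derive the result from the converse direction of Theorem~\ref{mainjithm:hji}. After reindexing, set $G_n = H_n$ and $\rho_n = \phi_{n+1} : G_{n+1} \twoheadrightarrow G_n$, so that $\ker\rho_n = K_{n+1}$. Since a hereditarily just infinite group is by definition infinite, I first discard the isomorphisms among the $\phi_n$: if $\phi_n$ is bijective, I replace the pair $H_{n+1} \xrightarrow{\phi_{n+1}} H_n \xrightarrow{\phi_n} H_{n-1}$ by the composite $H_{n+1} \xrightarrow{\phi_n\phi_{n+1}} H_{n-1}$, whose kernel is again $K_{n+1}$, so that hypotheses (i) and (ii) of the present theorem are inherited verbatim. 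Thus I may assume $K_n \neq \triv$ for all $n$ and that the sequence is infinite (this is where the infinitude of $G$ enters). The decisive choice is then $A_n := \phi_n\inv(B_n)$, where $B_n$ is any minimal normal subgroup of $H_{n-1}$; note $A_n \unlhd H_n$, $A_n > K_n$ with $A_n/K_n \cong B_n$, and $P_n = \rho_n(A_{n+1}) = B_{n+1}$.

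The first technical step is to show that $A_n$ is narrow in $H_n$ with $\M_{H_n}(A_n) = K_n$. Every $H_n$-invariant subgroup of $A_n$ is normal in $H_n$ (as $A_n \unlhd H_n$), so by hypothesis~(i) any proper such subgroup either lies in $K_n$ or strictly contains $K_n$; in the latter case it would correspond to a nontrivial proper normal subgroup of $H_{n-1}$ inside $B_n$, impossible as $B_n$ is minimal normal. Hence $K_n$ is the unique maximal $H_n$-invariant subgroup of $A_n$, giving $A_n \nar H_n$ and $\M_{H_n}(A_n) = K_n$. This at once yields the first half of condition~(i) of Theorem~\ref{mainjithm:hji}, namely $\M_{G_{n+1}}(A_{n+1}) = K_{n+1} = \ker\rho_n$. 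For the second half, $P_{n+1} = B_{n+2}$ is a minimal normal subgroup of $H_{n+1}$; since $K_{n+1} \neq \triv$, hypothesis~(i) forces $B_{n+2} \le K_{n+1}$ (otherwise $K_{n+1} < B_{n+2}$, contradicting minimality), so $\ker\rho_n \ge P_{n+1}$. Condition~(ii) is then checked directly against hypothesis~(i): a normal subgroup $L$ of $H_n$ with $L \le K_n$ satisfies $L \le K_n \le A_n$, while one with $L > K_n$ contains $P_n = B_{n+1} \le K_n$ (again using $K_n \neq \triv$ and the minimality of $B_{n+1}$).

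Finally, condition~(v) is essentially a restatement of hypothesis~(ii): any normal subgroup $L$ of $H_n$ containing $A_n$ satisfies $L \ge A_n > K_n$, so $L \nleq K_n$, and hypothesis~(ii) says precisely that such an $L$ admits no proper subgroup whose distinct $H_n$-conjugates pairwise centralize and generate $L$, i.e. $L$ is basally centrally indecomposable in $H_n$. With (i), (ii) and (v) verified for all $n$, Theorem~\ref{mainjithm:hji} gives that $G$ is hereditarily just infinite, with no hypothesis on virtual abelianness needed. I expect the main obstacle to be the bookkeeping at the boundary: confirming that collapsing the isomorphic steps leaves an infinite system with all kernels nontrivial, so that $G$ is genuinely infinite, since without this the algebraic identities above degenerate and a finite simple group could satisfy the hypotheses vacuously. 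The remaining verifications are direct consequences of hypothesis~(i) together with the minimality of the $B_n$.
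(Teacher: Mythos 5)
Your proposal is correct, and it rests on the same key result as the paper---both arguments exhibit data $(G_n,\rho_n,A_n)$ verifying the converse direction of Theorem~\ref{mainjithm:hji}---but your choice of the distinguished subgroups is genuinely different. The paper, after the same relabelling step that removes the injective maps, passes to a doubly-spaced system $G_n = H_{2n+2}$ with $\rho_n = \phi_{2n+3}\phi_{2n+4}$, and takes $A_n = \ker(\phi_{2n}\phi_{2n+1}\phi_{2n+2})$, a purely kernel-theoretic, choice-free definition; condition (i) of Theorem~\ref{mainjithm:hji} is then checked by showing that every maximal $G_{n+1}$-invariant subgroup of $A_{n+1}$ contains $\ker\rho_n = \ker(\phi_{2n+3}\phi_{2n+4})$, via hypothesis (i) applied in the intermediate quotient $H_{2n+3}$, and only the inequality $\M_{G_{n+1}}(A_{n+1}) \ge \ker\rho_n$ is obtained. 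You instead keep single steps, $\rho_n = \phi_{n+1}$, and take $A_n = \phi_n^{-1}(B_n)$ for a chosen minimal normal subgroup $B_n$ of $H_{n-1}$; hypothesis (i) together with minimality of $B_n$ then gives the exact equality $\M_{G_n}(A_n) = K_n$ (so $A_n$ is narrow), and your verifications of $P_{n+1} \le \ker\rho_n$, of condition (ii), and of condition (v)---which, as you note, becomes a literal restatement of hypothesis (ii) once $U \ge A_n > K_n$ rules out $U \le K_n$---are all sound. What each approach buys: the paper's construction involves no choices, at the cost of the two-step bookkeeping and a slightly more delicate maximality argument; yours is more economical and in fact realizes the full list (i)--(v) of Theorem~\ref{mainjithm:hji}, since $P_n = B_{n+1}$ is minimal normal in $G_n$, so you recover condition (iii) for free. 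One shared caveat: both proofs use the nontriviality of the kernels after collapsing bijections in an essential way (your step $B_{n+2} \le K_{n+1}$ fails if $K_{n+1} = \triv$), and both tacitly assume that infinitely many $\phi_n$ are non-injective, equivalently that $G$ is infinite; as you correctly observe, the hypotheses as stated hold vacuously for, say, a constant system on a finite nonabelian simple group, so this assumption is genuinely needed---but the paper's phrase ``after relabelling, we may assume that none of the homomorphisms are injective'' makes exactly the same tacit assumption, so your proof concedes nothing to the paper's on this point, and you at least flag the issue explicitly.
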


\begin{proof}
After relabelling, we may assume that none of the homomorphisms $H_n \twoheadrightarrow H_{n-1}$ are injective.  Set
\[
G_n = H_{2n+2}; \; P_n = \ker\phi_{2n+2}; \; Q_n = \ker(\phi_{2n+1}\phi_{2n+2}); \; A_n = \ker(\phi_{2n}\phi_{2n+1}\phi_{2n+2}).
\]
Notice that $\rho_n := \phi_{2n+3}\phi_{2n+4}$ is a surjective homomorphism from $G_{n+1}$ to $G_n$ with kernel $Q_{n+1}$.  We see that every normal subgroup $U$ of $G_n$ containing $A_n$ properly contains $P_n$, and hence by (ii), $U$ is basally centrally indecomposable in $G_n$.  Thus condition (v) of Theorem~\ref{mainjithm:hji} is satisfied.  Let $M$ be a maximal $G_{n+1}$-invariant subgroup of $A_{n+1}$.  Then $M$ is not properly contained in $Q_{n+1}$; by considering the quotient $H_{2n+3}$ of $G_{n+1}$ and applying (i), we see that $M \ge Q_{n+1}$.  Thus $M_{G_{n+1}}(A_{n+1}) \ge \ker\rho_n$; clearly also $\ker\rho_n \ge P_{n+1}$.  Thus condition (i) of Theorem~\ref{mainjithm:hji} is satisfied.  Condition (ii) of Theorem~\ref{mainjithm:hji} also follows from condition (i) of the present theorem.  Hence by Theorem~\ref{mainjithm:hji}, $G$ is hereditarily just infinite.
\end{proof}

For virtually pronilpotent groups, there is an alternative way to specialize Theorem~\ref{mainjithm} to a characterization of the hereditarily just infinite property, without having to check that a collection of normal subgroups are basally centrally indecomposable.  We appeal to some results from \cite{ReiFJ}.

\begin{lem}\label{fijilem}
Let $G$ be a profinite group with no nontrivial finite normal subgroups. 
\begin{enumerate}[(i)]  
\item (\cite[Lemma~4]{ReiFJ}) If $G$ has a (hereditarily) just infinite open subgroup, then $G$ is (hereditarily) just infinite.
\item (\cite[Theorem~2]{ReiFJ}) If $G$ is pro-$p$ and $\Phi(G)$ is just infinite, then $G$ is hereditarily just infinite.\end{enumerate}
\end{lem}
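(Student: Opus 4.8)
The plan is to base everything on two elementary observations about a profinite group $G$ with an open subgroup $H$: a nontrivial normal subgroup of $G$ that meets a just infinite subgroup nontrivially must itself be open, whereas a normal subgroup meeting an \emph{open normal} subgroup trivially must centralise it. The hypothesis that $G$ has no nontrivial finite normal subgroup will repeatedly be used to turn ``finite'' into ``trivial''. For part (i) in the just infinite case I would argue as follows. Since $H$ is infinite of finite index, $G$ is infinite. Given $1 \neq N \unlhd G$, consider $N \cap H \unlhd H$: if $N \cap H \neq 1$ then $N \cap H \leq_o H$ because $H$ is just infinite, so $N$ is open; if $N \cap H = 1$ then, writing $H_G$ for the open normal core of $H$ in $G$, we have $N \cap H_G = 1$, so $N \cong N H_G / H_G$ embeds in the finite group $G/H_G$ and is therefore a finite normal subgroup of $G$, hence trivial, contradicting $N \neq 1$. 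Thus $G$ is just infinite.

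For the hereditarily just infinite case of part (i), $G$ is already just infinite by the above. The crucial reduction is to show that every open $U \leq_o G$ has no nontrivial finite normal subgroup; granting this, $U \cap H$ is an open just infinite subgroup of $U$ (being open in the hereditarily just infinite $H$), so the just infinite case applied to $U$ shows $U$ is just infinite, and hence $G$ is hereditarily just infinite. To control a finite $F \unlhd U$, I would split into two cases. If $G$ is not virtually abelian, let $M$ be the core in $G$ of $U \cap H$: this is an open normal subgroup of $G$ contained in $H$, hence just infinite and nonabelian, so $\Z(M) = 1$; then $\CC_G(M) \unlhd G$ meets $M$ trivially, hence embeds in the finite group $G/M$, so $\CC_G(M)$ is a finite normal subgroup of $G$ and therefore trivial. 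Since $F \cap M$ is a finite normal subgroup of the just infinite group $M$ it is trivial, whence $[F,M] \leq F \cap M = 1$ and $F \leq \CC_G(M) = 1$. If instead $G$ is virtually abelian, I would invoke the structure used in the proof of Theorem~\ref{mainjithm:hji}, namely that $G$ has a unique maximal abelian open normal subgroup $Z \cong \bZ_p$ with $\CC_G(Z) = Z$; then $C := U \cap Z \cong \bZ_p$ is normal in $U$ and torsion free, so $F \cap C = 1$ and $F$ centralises $C$, and as $C$ is open in $Z$ this forces $F \leq \CC_G(Z) = Z$, giving $F \leq U \cap Z = C$ and so $F = 1$.

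For part (ii) I would first reduce to the case that $\Phi(G)$ is open, equivalently that $G$ is finitely generated. A just infinite pro-$p$ group is finitely generated (otherwise its Frattini quotient is infinite, forcing the Frattini subgroup, a normal subgroup of infinite index, to be trivial and the group to be infinite elementary abelian, which is not just infinite), so $\Phi(G)$ is finitely generated; combined with the absence of a nontrivial finite normal subgroup, which excludes the ``extra elementary abelian generators'' that would otherwise inflate $G/\Phi(G)$, this gives that $G$ is finitely generated and $\Phi(G) \leq_o G$. Now part (i) applies with the open just infinite subgroup $\Phi(G)$ to show that $G$ is just infinite, and by the hereditarily just infinite case of part (i) it suffices to prove that $\Phi(G)$ is itself hereditarily just infinite. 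Here I would exploit the special feature of the Frattini subgroup: for every open $U \leq_o G$ one has $\Phi(U) = \overline{U^p[U,U]} \leq \overline{G^p[G,G]} = \Phi(G)$ with $\Phi(U) \leq_o U$, so each $\Phi(U)$ is an open subgroup of $\Phi(G)$, and iterating produces the Frattini series $G \geq \Phi(G) \geq \Phi^2(G) \geq \cdots$ of open characteristic subgroups whose intersection, being normal of infinite index in the just infinite $G$, is trivial.

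The main obstacle is precisely this last step, upgrading ``just infinite'' to ``hereditarily just infinite'' for $\Phi(G)$; it cannot follow from just-infiniteness alone, since just infinite pro-$p$ groups need not be hereditarily just infinite, so the argument must genuinely use that $\Phi(G)$ is a Frattini subgroup. I would attack it by a minimal counterexample: if some open $W \leq_o \Phi(G)$ carried a nontrivial normal subgroup $M$ of infinite index, choose $W$ of least index, pass to $W \unlhd W' \leq_o \Phi(G)$ of index $p$ (open subgroups of pro-$p$ groups being subnormal), note $W'$ is just infinite by minimality, and deduce that the intersection of the $p$ conjugates of $M$ in $W'$ is trivial while their product is open; the remaining task is to derive a contradiction from $W \leq \Phi(G)$, re-using the centraliser and finiteness arguments of part (i) together with the obliquity estimates of Theorem~\ref{genob}. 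I expect this bookkeeping, rather than any single inequality, to be the technical heart, and the point at which the detailed results of \cite{ReiFJ} would be needed.
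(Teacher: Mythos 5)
First, a framing point: the paper does not actually prove this lemma --- both parts are quoted verbatim from \cite{ReiFJ} (Lemma~4 and Theorem~2 there), so your attempt is being measured against that external source rather than an in-text argument. Your treatment of part (i) is correct and essentially self-contained: the dichotomy $N \cap H \neq 1$ versus $N \cap H_G = 1$ gives the just infinite case, and your reduction of the hereditary case to showing that open subgroups of $G$ inherit the ``no nontrivial finite normal subgroup'' property, handled via $\CC_G(M)$ when $G$ is not virtually abelian and via the $\bZ_p$-structure when it is, is sound. Two small repairs: you should justify $\Z(M)=1$ (if $\Z(M) \neq 1$ it is open in the just infinite $M$, so $[M,M]$ is finite by Schur's theorem, hence a finite normal subgroup of the infinite just infinite $M$, hence trivial, contradicting $M$ nonabelian), and you should not cite the proof of Theorem~\ref{mainjithm:hji} for the structure $Z \cong \bZ_p$, $\CC_G(Z)=Z$, since there it is stated for \emph{hereditarily} just infinite $G$, which is what you are proving; instead derive it directly ($\CC_G(A)$ of an open normal $A \cong \bZ_p$ is center-by-finite, so abelian and torsion-free by the no-finite-normal-subgroup hypothesis), and note explicitly that centralizing an open subgroup of $\bZ_p$ forces centralizing all of it.

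Part (ii), however, contains two genuine gaps, and you concede the second one yourself. First, the claim that $G$ is finitely generated (equivalently $\Phi(G) \leq_o G$) is not proved: ``excludes the extra elementary abelian generators'' is not an argument. It is a real assertion needing work --- for instance, when $\Phi(G)$ is nonabelian one can show $\CC_G(\Phi(G)) \le \Z(G)$ via $[\CC_G(\Phi(G)),G] \le [G,G] \cap \CC_G(\Phi(G)) \le \Phi(G) \cap \CC_G(\Phi(G)) = 1$ and then kill it using the finite-normal-subgroup hypothesis, but this embeds $G$ in $\Aut(\Phi(G))$ and still does not by itself bound $G/\Phi(G)$. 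Second, and more seriously, the entire mathematical content of \cite[Theorem~2]{ReiFJ} is the upgrade from ``$\Phi(G)$ just infinite'' to hereditariness, and your proposal does not supply it: your Frattini-series observation only shows $\bigcap_n \Phi^n(G) = 1$, which says nothing about whether the intermediate open subgroups are just infinite, and your minimal-counterexample sketch ends with ``the remaining task is to derive a contradiction \ldots{} the point at which the detailed results of \cite{ReiFJ} would be needed'' --- i.e., it defers to the very result being proved. As you correctly observe, just infinite pro-$p$ groups need not have just infinite Frattini subgroups and need not be hereditarily just infinite, so this step must exploit the Frattini hypothesis in an essential way; as it stands, part (ii) of your proposal is an accurate reduction plus an acknowledged hole, not a proof.
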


This leads naturally to our modified construction.

\begin{thm}\label{hjivp}
Let $\Lambda = \{(G_n)_{n > 0},\rho_n: G_{n+1} \twoheadrightarrow G_n\}$ be an inverse system of finite groups.  Let $F_n = \Phi(\OO_p(G_n))$, let $\triv < A_n \unlhd F_n$ and let $P_{n} = \rho_{n}(A_{n+1})$ for $n > 0$.  Suppose that the following conditions hold for all $n > 0$:
\begin{enumerate}[(i)]
\item $\M_{F_{n+1}}(A_{n+1}) \ge \ker\rho_n \ge P_{n+1}$;
\item Each normal subgroup of $F_n$ contains $P_n$ or is contained in $A_n$ (or both);
\item $F_{n+1} = \rho\inv_n(F_n)$;
\item Every minimal normal subgroup of $G_1$ is contained in $F_1$.
\end{enumerate}
Then $G$ is hereditarily just infinite and virtually pro-$p$.  Conversely, every hereditarily just infinite virtually pro-$p$ group is the limit of such an inverse system.
\end{thm}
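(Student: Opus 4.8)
The plan is to handle the two directions separately, in each case pivoting on the open normal subgroups $W = \OO_p(G)$ and $F = \Phi(W)$ and reducing to Theorem~\ref{mainjithm} and Lemma~\ref{fijilem}.

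For the forward direction I would first record that each $\ker\rho_n$ is a $p$-group: by (iii) it lies in $F_{n+1}=\Phi(\OO_p(G_{n+1}))$, a finite $p$-group. Hence $\rho_n^{-1}(\OO_p(G_n))$ is a normal $p$-subgroup of $G_{n+1}$ (an extension of the $p$-group $\OO_p(G_n)$ by the $p$-group $\ker\rho_n$), so it equals $\OO_p(G_{n+1})$; thus $W_n:=\OO_p(G_n)$ form a surjective subsystem with $W_{n+1}=\rho_n^{-1}(W_n)$. Writing $\pi_n:G\to G_n$ for the projections, condition (iii) makes $\pi_n^{-1}(F_n)$ and $\pi_n^{-1}(W_n)$ independent of $n$; call them $F$ and $W$. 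Then $W$ is an open normal pro-$p$ subgroup, so $G$ is virtually pro-$p$, and since $\Phi$ of a pro-$p$ group is detected in finite quotients, $\Phi(W)=\bigcap_n\pi_n^{-1}(\Phi(W_n))=\bigcap_n\pi_n^{-1}(F_n)=F$. Applying the converse half of Theorem~\ref{mainjithm} to the system $\{F_n,\rho_n|_{F_{n+1}},A_n\}$ — whose transition maps have kernel $\ker\rho_n\le F_{n+1}$, so that the present (i) and (ii) are exactly conditions (i) and (ii) there — shows that $F=\Phi(W)$ is just infinite.

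The remaining, and first genuinely delicate, step in this direction is to verify the hypotheses of Lemma~\ref{fijilem}, namely that $G$ has no nontrivial finite normal subgroup; this is where (iv) is essential. I would first show that (iv) propagates: if every minimal normal subgroup of $G_n$ lies in $F_n$, then since a minimal normal subgroup $M$ of $G_{n+1}$ either lies in $\ker\rho_n\le F_{n+1}$ or maps isomorphically onto a minimal normal subgroup of $G_n$, the relation $F_{n+1}=\rho_n^{-1}(F_n)$ forces $M\le F_{n+1}$. A nontrivial finite normal subgroup of $G$ would contain a finite minimal normal subgroup $N$, and for large $n$ the map $\pi_n$ embeds $N$ as a minimal normal subgroup of $G_n$; the propagated (iv) then gives $N\le\pi_n^{-1}(F_n)=F$, contradicting that $F$ is just infinite. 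Hence $G$, and therefore $W$ (a nontrivial finite normal subgroup of $W$ would have finite, hence nontrivial finite, normal closure in $G$), has no nontrivial finite normal subgroup. Lemma~\ref{fijilem}(ii) then makes $W$ hereditarily just infinite, and Lemma~\ref{fijilem}(i) lifts this to $G$.

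For the converse, let $G$ be hereditarily just infinite and virtually pro-$p$; take $W=\OO_p(G)$ (open, since it contains the core of any open pro-$p$ subgroup) and $F=\Phi(W)$. As $W$ is just infinite and pro-$p$, it is finitely generated (Lemma~\ref{melfin}, using $\M(\cdot)=\Phi(\cdot)$ for pro-$p$ groups), so $F$ is open and just infinite. The goal is a descending chain of \emph{$G$-normal} open $U_n\le F$ with trivial intersection such that, with $G_n=G/U_n$, one has $\OO_p(G_n)=W/U_n$ (because $\OO_p(G/W)=1$), $F_n=\Phi(\OO_p(G_n))=F/U_n$ (because $U_n\le F=\Phi(W)$), and subgroups $A_n,P_n$ satisfying (i)--(iv). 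The main obstacle is that conditions (i) and (ii) are phrased relative to $F$ (via $\M_{F_{n+1}}$ and the normal subgroups of $F_n$), while the quotients $G_n$ demand kernels normal in $G$, not merely in $F$. I would resolve this by recursively choosing nontrivial \emph{$G$-invariant} open $K_n\le F$ with $K_{n+1}\le\Ob_F(\M_F(K_n))$ and $\bigcap_nK_n=1$, and setting $U_n=\M_F(K_{n+1})$, $A_n=K_n/U_n$, $P_n=K_{n+1}/U_n=\rho_n(A_{n+1})$. The crucial point is that, $K_{n+1}$ being normal in $G$, the group $G$ permutes the maximal $F$-invariant subgroups of $K_{n+1}$, so $\M_F(K_{n+1})$ is again normal in $G$; this legitimises $G_n=G/U_n$ while giving $\M_{F_{n+1}}(A_{n+1})=\ker\rho_n$ and $\ker\rho_n\ge P_{n+1}$, i.e.\ (i). Condition (ii) follows from the defining property of $\Ob_F(\M_F(K_n))$ together with the fact that a nontrivial normal subgroup of the just infinite group $F$ is open, hence contains $\Ob_F(\M_F(K_n))\ge K_{n+1}$ once it escapes $K_n$; (iii) is automatic from $U_{n+1}\le U_n\le F$; and (iv) is arranged by choosing $K_2$ (hence $U_1$) below $M\cap F$ for the finitely many normal subgroups $M\not\le F$ of the just infinite group $G$, so that no minimal normal subgroup of $G/U_1$ can lie outside $F/U_1$.
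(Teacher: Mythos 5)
Your proposal is correct, and its overall skeleton is the paper's: in the forward direction, apply the converse half of Theorem~\ref{mainjithm} to the subsystem $(F_n)$ (legitimate since (iii) makes $\rho_n|_{F_{n+1}}$ surjective onto $F_n$ and $\ker\rho_n \le F_{n+1}$) to get $F = \Phi(\OO_p(G))$ just infinite, use (iv) to exclude bad normal subgroups, and finish with Lemma~\ref{fijilem}(ii) then (i); in the converse, build a descending chain inside $F$ using $\Ob_F(\M_F(\cdot))$ and quotient by kernels of the form $\M_F(\cdot)$. Two local steps, however, are handled genuinely differently, and both of your variants work. First, the paper rules out normal subgroups avoiding $F$ in a single step at level $1$: if $K \unlhd G$ with $K \cap F = \triv$, then $\pi_1(FK) = F_1 \times \pi_1(K)$ with $\pi_1(K)$ nontrivial (as $\ker\pi_1 \le F$), so a minimal normal subgroup of $G_1$ inside $\pi_1(K)$ contradicts (iv); this shows every nontrivial normal subgroup of $G$ meets $F$ and is hence infinite. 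Your route instead propagates (iv) up the tower — correct, via the observation that a minimal normal subgroup of $G_{n+1}$ either lies in $\ker\rho_n \le F_{n+1}$ or maps isomorphically onto a minimal normal subgroup of $G_n$ — and only excludes \emph{finite} normal subgroups, which is all Lemma~\ref{fijilem} needs. Second, in the converse the paper chooses $R_n$ narrow in $F$ (via Lemma~\ref{narrowassoc}), forces $G$-normality of the kernels by passing to the core $S_n$ of $\M_F(R_n)$ in $G$, and secures (iv) by choosing $R_1 \le \M(\Ob^*_G(F))$, invoking the hereditary hypothesis through $\Ob^*_G$. You instead take the chain subgroups $K_n$ to be $G$-invariant from the outset (narrowness is dispensable here since, unlike Theorem~\ref{mainjithm}, there is no minimality condition on $P_n$), note that $\M_F(K_{n+1})$ is then automatically normal in $G$ because conjugation permutes the maximal $F$-invariant subgroups of the $G$-normal $K_{n+1}$ — so no cores are needed and $\M_{F_{n+1}}(A_{n+1}) = \ker\rho_n$ holds exactly — and you secure (iv) with the plain $\Ob_G(F)$ (only finitely many normal subgroups of $G$ escape $F$, by Theorem~\ref{genob}), which uses only that $G$ is just infinite. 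This is a mild but genuine simplification of the paper's construction. One small point to tidy: you stipulate $\bigcap_n K_n = \triv$ without saying how to achieve it; either choose $K_{n+1}$ inside the intersection of $\Ob_F(\M_F(K_n))$ with the $n$th term of a fixed descending chain of open normal subgroups of $G$ with trivial intersection, or observe (as the paper does in Theorem~\ref{mainjithm}) that the limit of the $G/U_n$ is an infinite quotient of the just infinite group $G$ and hence equals $G$, since $U_{n+1} < K_{n+2} \le U_n$ forces the quotients to grow.
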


\begin{proof}
Suppose $\Lambda$ is an inverse system as specified, with inverse limit $G$.  Let $F = \Phi(\OO_p(G))$.  By condition (iv), $F = \varprojlim F_n$ and $|G:F| = |G_1:F_1| < \infty$, so $F$ is an open pro-$p$ subgroup of $G$.  Moreover, $F$ is just infinite by Theorem~\ref{mainjithm}.  Let $K$ be a nontrivial normal subgroup of $G$.  If $F \cap K = \triv$, then $\pi_1(FK) = F_1 \times \pi_1(K)$ where $\pi_1: G \rightarrow G_1$ is the natural projection.  Since $\ker\pi_1 \le F$, the group $\pi_1(K)$ is nontrivial and we have a contradiction to (iv).  Thus every nontrivial normal subgroup of $G$ intersects $F$ nontrivially, and is hence infinite; by Lemma~\ref{fijilem} it follows that $G$ is hereditarily just infinite.

Now let $G$ be a hereditarily just infinite virtually pro-$p$ group.  Let $F = \Phi(\OO_p(G))$.  Then $F$ is open in $G$ and in particular just infinite.  The construction of a suitable inverse system is analogous to that in the proof of Theorem~\ref{mainjithm}.

Choose $R_1 \nar F$ such that $R_1 \le \M(\Ob^*_G(F))$ and $S_1$ to be the core of $\M_F(R_1)$ in $G$.  Thereafter, choose $R_{n+1} \nar F$ such that $R_{n+1} \le \Ob_F(\M_F(S_n))$ and set $S_{n+1}$ to be the core of $\M_F(R_{n+1})$ in $G$.

Set $G_n = G/S_{n+1}$, $A_n = R_n/S_{n+1}$ and $P_n = R_{n+1}/S_{n+1}$.  Then conditions (i) and (ii) are satisfied for the same reasons as in Theorem~\ref{mainjithm}.  The choice of $S_1$ ensures conditions (iii) and (iv).
\end{proof}

\section{Further remarks on nonabelian chief factors}

Given a narrow normal subgroup $A$ of $G$, there are some alternative descriptions of $\M_G(A)$ in the case that $A/\M_G(A)$ is either central in $G$ or topologically perfect.

\begin{prop}\label{nonabmel}Let $G$ be a profinite group and let $1 < A \unlhd G$.
\begin{enumerate}[(i)]
\item We have $[A,\M_G(A)] \le \M(A)$.  If $A/\M_G(A)$ is topologically perfect, then $\M_G(A) = \M(A)$.
\item Suppose $A > \M(A)[A,G]$.  Then $A \nar G$ if and only if $|A:\M(A)[A,G]|$ is prime and every chief factor of $G$ occurring as a quotient of $A$ is central.  If $A \nar G$ then $\M_G(A) = \M(A)[A,G]$.
\item If $G$ is pronilpotent, then $\M_G(A) = \M(A)[A,G]$.
\end{enumerate}
\end{prop}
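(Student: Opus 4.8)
The three parts are linked by a single reduction: everything is controlled by the maximal open normal subgroups of $A$ together with the passage to $\bar{A} = A/\M(A)$, on which $\M_G(A)/\M(A)$ will turn out to be central. I would prove them in the order (i), then (ii) and (iii) together.

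For the first assertion of (i), the plan is to show $[A,\M_G(A)] \le M$ for every maximal open normal subgroup $M$ of $A$ and then intersect. Fix such an $M$, so that $A/M$ is finite simple, and note that $[A,\M_G(A)] \le M$ is equivalent to the image of $\M_G(A)$ in $A/M$ being central. If $A/M$ is abelian this is automatic, since then $[A,A] \le M$ and $[A,\M_G(A)] \le [A,A]$. If $A/M$ is nonabelian simple it suffices to prove $\M_G(A) \le M$, and for this I would pass to the $G$-core $M_G = \bigcap_{g \in G} M^g$: this is a proper $G$-invariant subgroup of $A$, and $A/M_G$ is a subdirect product of the finitely many conjugates $A/M^g$, all isomorphic to $A/M$ and permuted transitively by $G$. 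The point is then that $A/M_G$ is a single $G$-chief factor, so that $M_G$ is a \emph{maximal} $G$-invariant subgroup and hence $\M_G(A) \le M_G \le M$. \emph{This structural step is the main obstacle}: it rests on the description of subdirect products of nonabelian simple groups as direct products of diagonal subgroups over a partition of the coordinates, from which transitivity of $G$ on the coordinates forces the only $G$-invariant normal subgroups of $A/M_G$ to be trivial and everything.

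For the second assertion of (i), suppose $A/\M_G(A)$ is topologically perfect. By the first assertion $W := \M_G(A)/\M(A)$ is central in $\bar{A} = A/\M(A)$ and $\bar{A}/W \cong A/\M_G(A)$ is perfect. I would then rule out abelian-type maximal open normal subgroups of $A$: if $M$ were one, its $G$-core $M_G$ (with $A/M_G$ abelian) would lie below some maximal $G$-invariant subgroup $R$, and then $A/R$ would be simultaneously a quotient of the perfect group $A/\M_G(A)$ and of the abelian group $A/M_G$, hence trivial --- contradicting $R < A$. Thus every maximal open normal subgroup of $A$ has nonabelian simple quotient, so by the nonabelian case above $\M_G(A) \le M$ for each of them; intersecting gives $\M_G(A) \le \M(A)$, and equality follows from $\M(A) \le \M_G(A)$.

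Parts (ii) and (iii) both run through $\bar{A} = A/\M(A)$ with $W = \M_G(A)/\M(A)$ central by (i). For (ii), if $A \nar G$ then $\bar{A}/W$ is the unique top $G$-chief factor; since $\M(A)[A,G]$ is $G$-invariant and proper (using $A > \M(A)[A,G]$) it lies in $\M_G(A)$, so $G$ centralizes this chief factor. A central $G$-chief factor is cyclic of prime order $p$, so $\bar{A} = \langle x\rangle W$ is abelian, and being abelian with $\M(\bar{A})=1$ it has squarefree exponent. Uniqueness of the maximal $G$-invariant subgroup then forces the coinvariant quotient $\bar{A}/[\bar{A},G]$ to have a unique maximal subgroup, which together with squarefree exponent makes it cyclic of order exactly $p$; this yields $|A:\M(A)[A,G]| = p$ and $\M_G(A) = \M(A)[A,G]$, giving the forward implication and the final clause. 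The converse is a direct check: the two stated conditions give $\M(A)[A,G] \le \M_G(A)$ of prime index $p$, whence $A/\M_G(A) \cong C_p$ is simple and $\M_G(A)$ is the unique maximal $G$-invariant subgroup, so $A \nar G$. For (iii), pronilpotence of $G$ makes every $G$-chief factor central; feeding this into the core argument of (i) shows every maximal open normal subgroup of $A$ has quotient $C_p$, so $\bar{A}$ is again abelian of squarefree exponent, and every maximal $G$-invariant subgroup contains $[\bar{A},G]$. Hence the maximal $G$-invariant subgroups are exactly the preimages of the maximal subgroups of $\bar{A}/[\bar{A},G]$, whose cosocle is trivial, and therefore $\M_G(A) = \M(A)[A,G]$.
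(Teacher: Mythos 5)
Your argument is correct, and it runs on the same engine as the paper's proof: pass to the $G$-core of a (maximal open) normal subgroup of $A$ with nonabelian simple quotient, and exploit the structure of closed subdirect products of nonabelian finite simple groups. The differences are local but real. In part (i), you obtain $[A,\M_G(A)] \le \M(A)$ by intersecting over \emph{all} maximal open normal subgroups $M$ of $A$, with the abelian case trivial; the paper instead proves $\M_G(A) \le L$ only for cores $L$ of nonabelian-type kernels and then invokes the decomposition $A/\M(A) = \Z(A/\M(A)) \times P/\M(A)$ with $P/\M(A)$ perfect, reading off centrality from $\M_G(A) \cap P \le \M(A)$; your route avoids that decomposition entirely. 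Within the nonabelian case, you show directly that the core $M_G$ is a \emph{maximal} $G$-invariant subgroup, using that $G$ permutes the coordinate kernels (hence the blocks of the diagonal decomposition) transitively, so the only $G$-invariant closed normal subgroups of $A/M_G$ are trivial and everything; the paper reaches the same conclusion by contradiction, producing from a hypothetical intermediate $G$-invariant subgroup either a $G$-invariant subgroup trapped inside $K$ above the core or a nontrivial $G$-invariant centralizer inside $K/L$. For the topologically perfect case you rule out abelian-type maximal open normal subgroups by exhibiting, via Lemma \ref{chieflem}(i), a chief-factor quotient $A/R$ that would be simultaneously perfect and abelian, whereas the paper argues through the characteristic subgroup $P$ with $A/P$ abelian. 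Parts (ii) and (iii) match the paper in substance, repackaged: you extract $|A:\M(A)[A,G]|=p$ from the unique maximal open subgroup of the squarefree-exponent central quotient $\bar{A}/[\bar{A},G]$, where the paper splits $A/B$ as $K_1/B \times K_2/B$ for the contrapositive and separates points of $V=A/B$ by maximal subgroups for (iii); your converse check and your (iii) are complete.

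One caveat, which you share with the paper (it writes $A/L = F_1 \times \dots \times F_n$ without comment): you assert that $M$ has \emph{finitely} many $G$-conjugates, but a priori $\N_G(M)$ is only closed, not open, so $A/M_G$ could conceivably be an infinite product. This matters because $\M_G(A)$ is defined through maximal \emph{open} $G$-invariant subgroups, so you need $M_G$ open, not merely maximal among closed $G$-invariant subgroups. The repair is immediate from your own transitivity argument: every proper closed $G$-invariant subgroup of $A$ containing $M_G$ equals $M_G$, and Lemma \ref{chieflem}(i) (choosing an open normal subgroup of $G$ containing $M_G$ but not $A$) supplies such a subgroup of the form $A \cap O$ that is open in $A$; hence $M_G = A \cap O$ is open, $A/M_G$ is finite, the conjugate set is finite after all, and $\M_G(A) \le M_G$ holds as you claim. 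It would be worth one sentence in a final write-up, but it is not a flaw relative to the paper's own proof.
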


\begin{proof}
(i)
Let $K$ be a normal subgroup of $A$ such that $A/K$ is a nonabelian simple group and let $L$ be the core of $K$ in $G$.  Then $A/L$ is isomorphic to a subdirect product of copies of $A/K$; since $A/K$ is a nonabelian simple group, in fact $A/L$ is isomorphic to a direct product of copies of $A/K$, say $A/L = F_1 \times F_2 \times \dots \times F_n$ where $F_i \cong A/K$.  We see that $K/L$ contains all but one of the simple direct factors of $A/L$, say $K/L = F_2 \times \dots \times F_n$.  Suppose $\M_G(A) \nleq L$; then there exists $M < A$ such that $M$ is $G$-invariant and $L < M$.  Then either $F_1 \nleq M/L$, in which case $M \le K$, or else $F_i \nleq M/L$ for $i > 1$, in which case $\CC_{G/L}(M/L)$ is a nontrivial $G$-invariant subgroup of $K/L$.  In either case we have a contradiction to the fact that $L$ is the core of $K$ in $G$.  Thus in fact $\M_G(A) \le L$ and in particular $\M_G(A) \le K$, showing that $A/\M_G(A)$ accounts for all nonabelian simple quotients of $A$.  The group $A/\M(A)$ is by construction a subdirect product of finite simple groups.  From there, it is easily seen that in fact $A/\M(A) = \Z(A/\M(A)) \times P/\M(A)$ where $P/\M(A)$ is perfect; indeed $P/\M(A)$ is a direct product of finite simple groups.  Since $A/\M_G(A)$ accounts for all nonabelian finite simple quotients, we must have $\M_G(A) \cap P \le \M(A)$, so $\M_G(A)/\M(A)$ is central in $A/\M(A)$ as required.  If $A/\M_G(A)$ is topologically perfect, then there are no $G$-invariant subgroups $N$ of $A$ such that $A/N$ is an abelian chief factor of $G$.  Since $P$ is characteristic in $A$ and $A/P$ is abelian, it follows that $P = A$ and hence $\M_G(A) = \M(A)$.

(ii)
Let $B = \M(A)[A,G]$.  Then $A/B$ is central in $G/B$ and $\M(A/B)=1$, so $A/B$ is residually an abelian simple group.

If $|A:B|$ is not prime, then $A/B$ is of the form $K_1/B \times K_2/B$ where $K_1/B$ is of prime order and $K_2/B$ is nontrivial.  Thus $K_1$ and $K_2$ are proper subgroups of $A$ normalized by $G$ and $K_1K_2 = A$.  If instead there is a chief factor $A/C$ of $G$ that is not central, then $[A,G] \not< C$, so $BC = A$, but both $B$ and $C$ are proper subgroups of $A$.  In either case $A$ is not narrow in $G$.

Now suppose $|A:B|=p$ and that any chief factor of $G$ of the form $A/C$ is central.  Let $K < A$ such that $A/K$ is a chief factor of $G$.  Then $A/K$ is a direct product of simple groups, so $K \ge \M(A)$, and $A/K \le \Z(G/K)$, so $K \ge [A,G]$ and hence $K \ge B$.  Since $|A:B|$ is prime, in fact we must have $K=B$.  Thus every proper $G$-invariant subgroup of $A$ is contained in $B$, in other words $A \nar G$ with $\M_G(A)=B$.

(iii)
Suppose $G$ is pronilpotent.  Then every chief factor of $G$ is central, so $\M_G(A) \ge [A,G]$; clearly also $\M_G(A) \ge \M(A)$, so $\M_G(A) \ge \M(A)[A,G]$.  On the other hand, the quotient $V = A/\M(A)[A,G]$ is a central factor of $G$, such that every Sylow subgroup of $V$ is elementary abelian; it then follows easily that for every nontrivial element $v$ of $V$, there is some maximal subgroup $W$ of $V$ that does not contain $v$.  Thus the $G$-invariant quotients of $A$ of prime order separate elements of $V$, so $\M_G(A) = \M(A)[A,G]$.
\end{proof}

We now give an inverse system characterization of a class of hereditarily just infinite groups that are not virtually prosoluble.  The class obtained is less general than that given by Theorem~\ref{mainjithm:hji}, but the conditions here are easier to check and still provide some interesting examples.  Note that the conditions in brackets are automatic, by the positive answer to the Schreier conjecture; Theorem~\ref{primhji} has been stated in such a way to avoid using any deep results about finite simple groups.  Proposition~\ref{intro:subprim} will follow immediately given the positive answer to the Schreier conjecture.

\begin{thm}\label{primhji}Let $\Lambda = \{(G_n)_{n > 0},\rho_n: G_{n+1} \twoheadrightarrow G_n\}$ be an inverse system of finite groups.  Let $A_n$ be a normal subgroup of $G_n$ and set $P_{n} = \rho_n(A_{n+1})$ for $n > 0$.  Suppose the following conditions hold for sufficiently large $n$:
\begin{enumerate}[(i)]  
\item $\M(A_{n+1}) = \ker\rho_{n} \ge P_{n+1}$;
\item $A_n > \CC_{G_n}(P_n)$;
\item $P_n$ is a direct product of nonabelian simple groups, such that $G_n$ permutes the simple factors of $P_n$ subprimitively by conjugation (and $\N_{G_n}(F)/F\CC_{G_n}(F)$ is soluble for each simple factor $F$ of $P_n$).\end{enumerate}

Then $G = \varprojlim G_n$ is hereditarily just infinite and not virtually prosoluble.  Moreover, this construction accounts for all profinite groups $G$ for which the following is true:

$\condc$ $G$ is just infinite, and there are infinitely many nonabelian chief factors $R/S$ of $G$ such that the simple factors of $R/S$ are permuted subprimitively by conjugation in $G/S$ (and such that $\N_{G/S}(F)/F\CC_{G/S}(F)$ is soluble for each simple factor $F$ of $R/S$).\end{thm}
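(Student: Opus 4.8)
The plan is to read conditions (i)--(iii) as a concrete specialization of the hypotheses of Theorem~\ref{mainjithm:hji}, and to deduce both directions from it. For the forward direction, assume $\Lambda$ satisfies (i)--(iii) for all large $n$; I will verify conditions (i), (ii) and (v) of Theorem~\ref{mainjithm:hji}. First, applying condition (i) at index $n$ gives $P_n \le \M(A_n) = \ker\rho_{n-1}$, and since $\M(A_n) \le \M_{G_n}(A_n) < A_n$ we obtain both $\M_{G_{n+1}}(A_{n+1}) \ge \ker\rho_n \ge P_{n+1}$ (condition (i) there) and the useful fact that $P_n \le \M(A_n) < A_n$; thus $A_n$ strictly contains $P_n$ and surjects onto the nonabelian semisimple group $A_n/\M(A_n) = P_{n-1}$. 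As $P_n$ is a direct product of nonabelian simple groups transitively permuted by $G_n$, it is a minimal normal subgroup, so for any $N \unlhd G_n$ either $[N,P_n]=1$, forcing $N \le \CC_{G_n}(P_n) < A_n$ by (ii), or $1 \ne [N,P_n] \le N \cap P_n$, forcing $N \ge P_n$ by minimality. This is precisely condition (ii) of Theorem~\ref{mainjithm:hji}.

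The substantial point is condition (v): every $N \unlhd G_n$ with $N \ge A_n$ must be basally centrally indecomposable. Suppose instead some $U \ge A_n$ is the normal closure of a proper subgroup $V$, with pairwise-centralizing conjugates $V_1,\dots,V_r$ ($r \ge 2$). Since $P_n \le A_n \le U$ and each simple factor of $P_n$ is centerless and perfect, each such factor lies in a single component $V_l$, distributing the set $\Omega$ of simple factors of $P_n$ into $r \ge 2$ blocks that $G_n$ permutes transitively. I would then exploit that a component $V_l$ centralizes, hence fixes, every factor outside its own block: on any $U$-orbit $O$ inside one block $B_1$, all components $V_l$ with $l \ne 1$ act trivially. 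If $U$ acts nontrivially on $\Omega$, some such $V_l$ supplies a nontrivial element of $U/(U \cap K)$ fixing $O$ pointwise, where $K$ is the kernel of the $G_n$-action on $\Omega$; this contradicts subprimitivity applied to the normal subgroup $U$. The main obstacle is the complementary case, where every component normalizes every simple factor of $P_n$ and subprimitivity says nothing. Here I expect to use that $\N_{G_n}(F)/F\CC_{G_n}(F)$ is soluble for each factor $F$, together with $\CC_{G_n}(P_n) < A_n$ and the fact that $A_n$ also maps onto the nonabelian semisimple quotient $P_{n-1}$, to rule out any block-respecting proper decomposition; making this last incompatibility precise is where the real work lies.

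Granting (v), Theorem~\ref{mainjithm:hji} yields that $G = \varprojlim G_n$ is hereditarily just infinite. For the assertion that $G$ is not virtually prosoluble, I would note that each $P_n$ is a nonabelian minimal normal subgroup of $G_n$ and, via the projections $\pi_n$, gives rise to a nonabelian chief factor of $G$; as this occurs for infinitely many $n$, the group $G$ has infinitely many nonabelian chief factors and hence cannot be virtually prosoluble.

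For the converse, given a just infinite $G$ satisfying $\condc$, I would run the construction of Theorem~\ref{mainjithm} refined as in Corollary~\ref{mainjithm:centralizers}. Let the classes $\mc{C}_n$ record the nonabelian characteristically simple groups realized by the infinitely many subprimitive chief factors $R/S$ furnished by $\condc$, and build a strictly descending chain $K_0 > K_1 > \cdots$ of narrow subgroups, choosing $K_{n+1} \nar G$ associated (via Lemma~\ref{narrowassoc}) to such a factor inside $\Ob_G(\M_G(K_n))$ and, using Lemma~\ref{seclem}, arranged so that $\CC_G(R/S)$ is captured below $K_n$. Setting $G_n = G/\M_G(K_{n+1})$, $A_n = K_n/\M_G(K_{n+1})$ and $P_n = K_{n+1}/\M_G(K_{n+1})$, the factor $P_n \cong R/S$ is topologically perfect, so $\M_{G_{n+1}}(A_{n+1}) = \M(A_{n+1})$ by Proposition~\ref{nonabmel}(i); arranging as in Theorem~\ref{mainjithm} that this equals $\ker\rho_n$ gives (i), the centralizer choice gives (ii), and (iii) is exactly the subprimitivity (with soluble outer action) of the selected chief factors inherited from $\condc$. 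Finally, given the positive solution of the Schreier conjecture the parenthetical solubility conditions are automatic, so $\condc$ specializes to the hypothesis of Proposition~\ref{intro:subprim} and the latter follows.
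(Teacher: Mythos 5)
Your forward direction has a genuine gap, and you have in fact flagged it yourself: the verification of condition (v) of Theorem~\ref{mainjithm:hji} in the case where every component of the putative decomposition normalizes every simple factor of $P_n$ is left as ``where the real work lies'', and it is doubtful this case can be closed from hypotheses (i)--(iii) at a single level $n$. In that case the solubility of $\N_{G_n}(F)/F\CC_{G_n}(F)$ together with perfectness of $A_n$ only yields $A_n = P_n \times \CC_{G_n}(P_n)$ with $\CC_{G_n}(P_n)$ perfect, which is not by itself contradictory; nothing in the level-$n$ data obviously rules out a basal central decomposition of some $U \ge A_n$. The paper avoids this entirely: it does \emph{not} route the forward direction through Theorem~\ref{mainjithm:hji}(v). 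Instead it argues directly that any nontrivial $H$ with $H \unlhd K \unlhd_o G$ is open: first $H_n = \pi_n(H)$ eventually escapes $A_n$, hence does not centralize $P_n$, so $H$ is not prosoluble and one may replace $H$ by the (nontrivial) intersection of its derived series, i.e.\ assume $H$ perfect; then $H_n \not\le P_n\CC_{G_n}(P_n)$ (this product lies in $A_n$) forces $H_n/(H_n \cap P_n\CC_{G_n}(P_n))$ insoluble, so by the parenthetical solubility condition $H_n$ acts nontrivially on $\Omega_n$; Lemma~\ref{lem:subprim} applied to $H_n \unlhd \pi_n(K) \unlhd G_n$ then gives $[H_n,P_n]=P_n$, whence $P_n \le H_n$ (first for $K$, then for $H$), and Lemma~\ref{critlem} with $\M(A_{n+1})=\ker\rho_n$ gives $A_{n+1} \le H_{n+1}$, so $H$ is open; Lemma~\ref{fijilem} upgrades ``every open normal subgroup is just infinite'' to hereditarily just infinite. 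The crucial structural point you are missing is the passage to the perfect core of $H$, which is what replaces any decomposition analysis.

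A second, independent error: you assert that $P_n$ is a minimal normal subgroup of $G_n$ because its simple factors are ``transitively permuted''. Condition (iii) does not assume transitivity --- subprimitive actions may be intransitive (see the Remark after Proposition~\ref{intro:subprim}) --- and in the intransitive case $P_n$ need not be minimal normal: a normal subgroup $N$ of $G_n$ acting trivially on $\Omega_n$ can meet $P_n$ in a proper $G_n$-invariant subproduct, so your dichotomy ``$N \le \CC_{G_n}(P_n)$ or $N \ge P_n$'' fails, and with it your derivation of condition (ii) of Theorem~\ref{mainjithm:hji}. This is another reason the hypotheses of Theorem~\ref{primhji} do not simply specialize those of Theorem~\ref{mainjithm:hji}, and why the paper's direct argument (which handles fixed-point-freeness on each orbit via Lemma~\ref{lem:subprim} rather than minimality) is needed. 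Your converse direction, by contrast, is essentially sound and close to the paper's: the paper builds the same chain of narrow subgroups with $\M_G(K_{n+1}) = \M(K_{n+1})$ via Proposition~\ref{nonabmel}(i), but secures $\CC_G(K_{n+1}/\M(K_{n+1})) \le \M(K_n)$ by a direct two-fold application of Theorem~\ref{genob} (if the centralizers rarely descended, their intersection $D$ would be open and would contain almost all candidates $K$, contradicting perfectness of $K/\M(K)$), rather than invoking Lemma~\ref{seclem}; your route through $\condb$ and Lemma~\ref{seclem} works too, provided you note that classes of nonabelian characteristically simple groups satisfy $\condb$ vacuously and that $\condc$ forces $G$ not to be virtually pronilpotent.
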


For the proof, we note a characterization of subprimitive actions.

\begin{lem}\label{lem:subprim}
Let $G$ be a group acting on a set $X$.  Then the following are equivalent:
\begin{enumerate}[(i)]
\item $G$ acts subprimitively on $X$;
\item For every $L \unlhd K \unlhd G$, either $L$ acts trivially on $X$, or $L$ fixes no point in $X$.
\end{enumerate}
\end{lem}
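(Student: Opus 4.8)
The plan is to translate both conditions into statements about the pointwise stabilizer of an orbit, and then check the two implications by direct computation. Writing $N$ for the kernel of the $G$-action on $X$, I would first note that for $H \unlhd G$ the kernel of the induced action of $H$ is exactly $H \cap N$; hence $H/(H \cap N)$ acts faithfully on an $H$-orbit $\Delta$ if and only if every element of $H$ fixing $\Delta$ pointwise already lies in $N$. So condition (i) is equivalent to the following reformulation, which I would record at the outset: for every $H \unlhd G$ and every $H$-orbit $\Delta$, the pointwise stabilizer $H_{(\Delta)} := \{h \in H : h\delta = \delta \text{ for all } \delta \in \Delta\}$ is contained in $N$.

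For (i) $\Rightarrow$ (ii), suppose $L \unlhd K \unlhd G$ and $L$ fixes some point $x$; I must show $L$ acts trivially on $X$. The key step is that $L$ then fixes the entire $K$-orbit $\Delta := Kx$ pointwise: for $\ell \in L$ and $k \in K$ we have $\ell(kx) = k\bigl((k\inv \ell k)x\bigr)$, and $k\inv \ell k \in L$ fixes $x$ because $L \unlhd K$. Applying the reformulation of (i) with $H = K$ then gives $L \le K_{(\Delta)} \le N$, so $L$ acts trivially on $X$, which is exactly the dichotomy asserted in (ii).

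For (ii) $\Rightarrow$ (i), fix $H \unlhd G$, an $H$-orbit $\Delta$, and set $L := H_{(\Delta)}$. The crucial observation is that $L \unlhd H$: for $h \in H$, $\ell \in L$ and $\delta \in \Delta$, one has $h\inv\delta \in \Delta$, so $(h\ell h\inv)\delta = h\bigl(\ell(h\inv\delta)\bigr) = \delta$. Thus $L \unlhd H \unlhd G$ is an admissible chain for (ii); since $\Delta$ is nonempty, $L$ fixes a point, so (ii) forces $L$ to act trivially on $X$, i.e. $L \le N$. As $H$ and $\Delta$ were arbitrary, this is precisely the reformulation of (i).

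The argument is short, and the only real content lies in the two transport steps: using normality of $L$ in $K$ to spread a single fixed point of $L$ across a whole orbit, and observing that the pointwise stabilizer of an orbit is normal in the acting group. The one point to watch in the write-up is notational, since the letter $K$ denotes the kernel of the action in Definition~\ref{def:subprim} but a generic normal subgroup in condition (ii); I would therefore fix a separate symbol (here $N$) for the kernel throughout the proof to avoid confusion.
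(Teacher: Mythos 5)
Your proof is correct and follows essentially the same route as the paper: both directions hinge on the same two observations, namely that normality of $L$ in $K$ spreads a fixed point of $L$ over a whole $K$-orbit (the paper phrases this as the fixed-point set of $L$ being $K$-invariant), and that the pointwise stabilizer of an $H$-orbit is normal in $H$, yielding an admissible chain $L \unlhd H \unlhd G$. The only cosmetic difference is that you argue (ii)~$\Rightarrow$~(i) directly via the reformulation $H_{(\Delta)} \le N$, whereas the paper runs the contrapositive; your explicit separation of the kernel symbol $N$ from the letter $K$ is a sensible notational precaution.
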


\begin{proof}
Let $N$ be the kernel of the action of $G$ on $X$.

Suppose $G$ acts subprimitively on $X$; let $L \unlhd K \unlhd G$, and let $Y$ be the set of fixed points of $L$.  Then $Y$ is a $K$-invariant set; thus if $Y$ is nonempty, then $K/(K \cap N)$ acts faithfully on $Y$.  Since $L$ acts trivially on $Y$, we conclude that $L \le K \cap N$, so $L$ acts trivially on $X$.  Thus (i) implies (ii).

Conversely, suppose that $G$ does not act subprimitively on $X$.  Then there is a normal subgroup $K$ and a $K$-orbit $Y$, such that $K/(K \cap N)$ does not act faithfully on $Y$.  The fixator $L$ of $Y$ in $K$ is then a subgroup of $G$ that acts nontrivially on $X$ (since $K \ge L > K \cap N$), has a fixed point in $X$ and satisfies $L \unlhd K \unlhd G$.  Thus (ii) implies (i).
\end{proof}

\begin{proof}[Proof of Theorem~\ref{primhji}]Suppose $\Lambda$ is an inverse system as specified, with inverse limit $G$; let $\pi_n: G \rightarrow G_n$ be the associated projection map.  By renumbering, we may assume that the given conditions hold for all $n$.  Note that conditions (i) and (iii) imply that $A_n$ is perfect and $\M(A_n) = \M_{G_n}(A_n)$ for all $n > 1$.  Condition (ii) ensures that $P_n$ is nontrivial; conditions (i) and (iii) then ensure that $G$ has a composition series with infinitely many nonabelian factors, that is, $G$ is infinite and not virtually prosoluble.

We claim now that $G$ is hereditarily just infinite.  To prove this claim, it suffices to show (given Lemma~\ref{fijilem}) that every open normal subgroup of $G$ is just infinite, that is, given any nontrivial subgroup $H$ of $G$ such that $H \unlhd K \unlhd_o G$ for some $K$, then $H$ is open in $G$.  Letting $H_n = \pi_n(H)$, there exists $n_0$ such that $H_n$ is not contained in $A_n$, for all $n \ge n_0$, so by condition (i), $H_n$ does not centralize $P_n$.  This implies $H_n \cap P_n$ is a nontrivial subnormal subgroup of $G_n$, which is therefore insoluble, so $H$ cannot be prosoluble.  Thus, the intersection $I$ of the terms in the derived series of $H$ is nontrivial.  Clearly $I$ is also normal in $K$, so without loss of generality we may assume $H=I$, that is, $H$ is perfect.  We can now draw a stronger conclusion about $H_n$ for sufficiently large $n$: since $H_n$ is perfect and not contained in $P_n\CC_{G_n}(P_n)$, the quotient $H_n/(H_n \cap P_n\CC_{G_n}(P_n))$ is insoluble.  This ensures that $H_n$ acts nontrivially on the set $\Omega_n$ of simple direct factors of $P_n$.  Since $H_n \unlhd \pi_n(K) \unlhd G_n$, it follows by Lemma~\ref{lem:subprim} that $H_n$ acts with no fixed points on $\Omega_n$ and hence $[H_n,P_n] = P_n$.  In the case when $H$ is normal in $G$, we conclude that $P_n \le H_n$ for $n$ sufficiently large.  In particular, we must have $P_n \le \pi_n(K)$ for $n$ sufficiently large.  But then the fact that $H_n$ is normal in $\pi_n(K)$ ensures that $P_n$ normalizes $H_n$, so $P_n \le H_n$.  It then follows that $A_{n+1} \le H_{n+1}\ker\rho_n = H_{n+1}\M_{G_{n+1}}(A_{n+1})$, so $A_{n+1} \le H_{n+1}$ by Lemma~\ref{critlem}; in particular, $\ker\rho_n \le H_{n+1}$ for $n$ sufficiently large, ensuring that $H$ is open in $G$ as desired.

Conversely, let $G$ be a profinite group satisfying $\condc$.  As in previous proofs, we construct a descending chain $(K_n)$ of open normal subgroups of $G$.

Let $K_0 = G$.  Suppose $K_n$ has been chosen.  Then $\M(K_n) \leq_o G$ by Lemma \ref{melfin} so $\Ob_G(\M(K_n)) \le_o G$ by Theorem \ref{genob}.  Given Lemma~\ref{narrowassoc}, we can find $K_{n+1} \nar G$ such that $K_{n+1} \le \Ob_G(\M(K_n))$, $K_{n+1}/\M_G(K_{n+1})$ is perfect, the simple factors of $K_{n+1}/\M_G(K_{n+1})$ are permuted subprimitively by conjugation in $G/\M_G(K_{n+1})$, and the group of outer automorphisms induced by $G$ on each simple factor is soluble; additionally $\M_G(K_{n+1})=\M(K_{n+1})$ by Proposition \ref{nonabmel}.  Moreover, we have infinitely many choices for $K_{n+1}$; suppose that for all but finitely many of them, $\CC_G(K_{n+1}/\M(K_{n+1})) \not\le \M(K_n)$.  Then by Theorem \ref{genob}, $D = \bigcap_K \CC_G(K/\M(K))$ would have finite index, where $K$ ranges over all possible choices for $K_{n+1}$.  By Theorem~\ref{genob} again, this would imply in turn that $K\le D$ for all but finitely many choices of $K$, which is absurd given that $K/\M(K)$ is perfect and $D$ centralizes $K/\M(K)$.  We can therefore ensure $\CC_G(K_{n+1}/\M(K_{n+1})) \le \M(K_n)$.  Now set $G_n = G/\M(K_{n+1})$, set $A_n = K_n/\M(K_{n+1})$, set $P_n = K_{n+1}/\M(K_{n+1})$ and let the maps $\rho_n$ be the natural quotient maps $G_{n+1} \rightarrow G_n$.  This produces an inverse system for $G$ with all the required properties.
\end{proof}

\begin{rem}A just infinite branch group $G$ can certainly have infinitely many nonabelian chief factors.  However, given condition $\condc$, the permutation action of $G$ on the simple factors of a chief factor can only be subprimitive for finitely many of these chief factors.\end{rem}

The following shows the flexibility of the conditions in Theorem~\ref{primhji}.

\begin{ex}\label{primex}Let $X_0$ be a finite set with at least two elements, let $G_{-1}$ be a nontrivial subprimitive subgroup of $\mathrm{Sym}(X_0)$, let $S_0,S_1,\dots$ be a sequence of nonabelian finite simple groups and let $F_0,F_1,\dots$ be a sequence of nontrivial finite perfect groups.  Set $G_0 = S_0 \wr_{X_0} G_{-1}$, that is, the wreath product of $S_0$ with $G_{-1}$ where the wreathing action is given by the natural action of $G_{-1}$ on $X_0$.  Thereafter $H_n$ is constructed from $G_n$ as $H_n = F_n \wr_{Y_n} G_n$, where the action of $G_n$ on $Y_n$ is subprimitive and faithful, and $G_{n+1}$ is constructed from $H_{n}$ as $G_{n+1} = S_{n+1} \wr_{X_{n+1}} H_n$ where the action of $H_n$ on $X_{n+1}$ is subprimitive and faithful.  Set $\rho_n:G_{n+1} \rightarrow G_n$ to be the natural quotient map from $S_{n+1} \wr_{X_{n+1}} (F_n \wr_{Y_n} G_n)$ to $G_n$ for $n \ge 0$, and let $G$ be the inverse limit arising from these homomorphisms.  For $n \ge 0$ set $A_{n+1}$ to be the kernel of the natural projection of $G_{n+1}$ onto $H_{n-1}$; thus $A_{n+1}$ is a direct product of $|X_{n}|$ copies of $S_{n+1} \wr_{X_{n+1}} (F_n \wr_{Y_{n}} S_n)$.  We observe that $A_n$ is perfect and $\M(A_n) = \ker\rho_{n-1}$.  Moreover, $P_n$ is a normal subgroup of $G$ isomorphic to a power of $S_n$, with $G_n$ permuting the copies of $S_n$ subprimitively and $\CC_{G_n}(P_n) = \triv$.  Thus $G$ is hereditarily just infinite by Theorem \ref{primhji}.

Note the following:
\begin{enumerate}[(i)]  
\item Every nontrivial finite group can occur as the initial permutation group $G_{-1}$.  There is also a great deal of freedom in the choice of the groups $F_n$, as there are general methods to embed a finite group as a `large' subnormal factor of a finite perfect group.  For instance if $B$ is a finite group, then $B \wr A_5$ has a perfect normal subgroup of the form $K \rtimes A_5$ where $K = \{f: \{1,2,\dots,5\} \rightarrow B \mid \prod^5_{i=1}f(i) \in [B,B]\}$; clearly $B$ appears as a quotient of $K$.  The groups $F_n$ could thus be chosen so that in the resulting inverse limit $G$, every finite group appears as a subnormal factor $K/L$, such that $L \unlhd K \unlhd H \unlhd G$.
\item Provided the $S_n$ or $F_n$ are chosen to be `universal' (that is, so that for every finite group $F$ there are infinitely many $n$ such that $F$ embeds into $S_n$ or $F_n$), then every countably based profinite group embeds into every open subgroup of $G$, because $G$ contains a closed subgroup $\prod_n S_n \times \prod_n F_n$ (take the `diagonal' subgroup at each level of the wreath product).  This gives an alternative proof of Theorem A of \cite{Wil}.
\item The group constructed by Lucchini in \cite{Lucchini} is a special case of the construction under discussion, and indeed an example of the previous observation.  This is especially interesting as Lucchini's paper predates \cite{Wil}, and appears to be the earliest example in the literature of a hereditarily just infinite profinite group that is not virtually pro-$p$.  (It is proven in \cite{Lucchini} that the group is just infinite, but the hereditarily just infinite property is not considered.)
\item There are exactly $2^{\aleph_0}$ commensurability classes of groups in this family of examples.  Consider for instance the set $\mc{S}$ of nonabelian finite simple groups occurring infinitely many times in a composition series for $G$.  Then $\mc{S}$ only depends on $G$ up to commensurability, but can be chosen to be an arbitrary nonempty subset of the set of all isomorphism classes of nonabelian finite simple groups.  On the other hand there cannot be more than $2^{\aleph_0}$ examples, since every just infinite profinite group is countably based and there are only $2^{\aleph_0}$ countably based profinite groups up to isomorphism.
\item There are interesting infinite ascending chains inside this family of examples.  For instance, let $T_k$ be the group formed in the inverse limit if one replaces $G_k$ with $S_k \wr_{X_k} \mathrm{Sym}(X_{k})$ in the construction and extends from there as before.  Then $G$ embeds naturally into $T_k$ as an open subgroup, and if $k' > k$ then $T_k$ embeds naturally into $T_{k'}$ as an open subgroup.  Taking the direct limit of the groups $T_k$, one obtains a totally disconnected, locally compact group $T$.  This group has a simple locally finite abstract subgroup $A$, formed as the union of the groups $\mathrm{Alt}(X_{k})$, where $\mathrm{Alt}(X_{k}) \le \mathrm{Sym}(X_{k}) \le T_k$.  The smallest closed normal subgroup $N$ of $T$ containing $A$ will then be a topologically simple open subgroup of $T$.  We thus obtain a totally disconnected, locally compact, topologically simple group $N$ such that every closed subgroup with open normalizer is open, but also (for a suitable choice of $S_n$ or $F_n$ as in (ii)) such that every identity neighbourhood in $N$ contains a copy of every countably based profinite group as a closed subgroup.
\end{enumerate}
\end{ex}

\end{document}